\definecolor{gr}{rgb}{0.1, .5 , .10}
\newtheorem{theorem}{Theorem}[section]
\newtheorem{theorem*}{Theorem}
\newtheorem{corollary}[theorem]{Corollary}
\newtheorem{corollary*}[theorem*]{Corollary}
\newtheorem{lemma}[theorem]{Lemma}
\newtheorem{proposition}[theorem]{Proposition}
\newtheorem{maintheorem}{Theorem}
\theoremstyle{definition}
\newtheorem{remark}[theorem]{Remark}
\newtheorem*{question*}{Question}
\newtheorem*{conjecture*}{Conjecture}
\newtheorem{example}[theorem]{Example}
\newtheorem*{notation*}{Notation}
\newtheorem*{claim*}{Claim}
\numberwithin{equation}{theorem}
\def\rad{\operatorname{rad}}
\def\Hom{\operatorname{Hom}}
\def\End{\operatorname{End}}
\def\Coker{\operatorname{Coker}}
\def\identity{\mathrm{id}}
\def\thick{\operatorname{\mathsf{thick}}}
\def\add{\operatorname{\mathsf{add}}}
\def\mod{\operatorname{\mathsf{mod}}}
\def\proj{\operatorname{\mathsf{proj}}}
\def\Kb{\mathsf{K^b}}
\def\sttilt{\operatorname{\mathsf{s\tau -tilt}}}
\def\silt{\operatorname{\mathsf{silt}}}
\def\C{\mathcal{C}}
\def\D{\mathcal{D}}
\def\T{\mathcal{T}}
\def\Y{\mathcal{Y}}
\newcommand{\spq}{{\rm sp}}
\newcommand{\old}[1]{{\color{red} #1}}
\begin{document}
%\allowdisplaybreaks
\setlength{\baselineskip}{15pt}
\title{$\tau$-tilting finite triangular matrix algebras}
\author{Takuma Aihara}
\address{Department of Mathematics, Tokyo Gakugei University, 4-1-1 Nukuikita-machi, Koganei, Tokyo 184-8501, Japan}
\email{aihara@u-gakugei.ac.jp}
\author{Takahiro Honma}
\address{Graduate School of Mathematics, Tokyo University of science, 1-3 Kagurazaka, Shinjuku, Tokyo 162-8601, Japan}
\email{1119704@ed.tus.ac.jp}

\keywords{silting object, silting-discrete, $\tau$-tilting finite, triangular matrix algebra}
\thanks{2020 {\em Mathematics Subject Classification.} 16G20, 16G60, 16B50}
\thanks{TA was partly supported by JSPS Grant-in-Aid for Young Scientists 19K14497.}
\begin{abstract}
First, we give a new example of silting-discrete algebras.
Second, one explores when the algebra of triangular matrices over a finite dimensional algebra is $\tau$-tilting finite.
In particular, we classify algebras over which triangular matrix algebras are $\tau$-tilting finite.
Finally, we investigate when a triangular matrix algebra is silting-discrete. 
\end{abstract}
\maketitle
%\tableofcontents
%%%%%%%%%%%%%%%%%%%%%%%%%%%%%%%%%%%%%%%%%%%%%%%%%%%%%%%%
\section{Introduction}

Silting objects play a central role in tilting theory to describe the structure of derived categories and control derived equivalences.
One of the most crucial purposes is to clarify the whole picture of silting objects.
To realize the goal, we first discuss when a triangulated category is silting-discrete; roughly speaking, the silting-discreteness is the finiteness of silting objects.
A finite dimensional algebra is said to be \emph{silting-discrete} if the perfect derived category is silting-discrete; for example, the following algebras are silting-discrete:
\begin{itemize}
\item representation-finite piecewise hereditary algebras \cite{Ai};
\item representation-finite symmetric algebras \cite{Ai};
\item Brauer graph algebras whose Brauer graphs have at most one odd cycle and none of even cycles \cite{AAC};
\item derived-discrete algebras with finite global dimension \cite{BPP};
\item weakly-symmetric preprojective algebras of Dynkin type \cite{Ai2, AM, AdK, AD};
\item algebras of dihedral, semidihedral and quaternion type \cite{EJR};
\item symmetric algebras of tubular type with nonsingular Cartan matrix \cite{AHMW}.
\end{itemize}

Let $\Lambda$ be a finite dimensional algebra over an algebraically closed field $K$ which is basic and ring-indecomposable.

The first aim of this paper is to construct a new silting-discrete algebra from a given one.
We denote by $\silt\Lambda$ the set of isomorphism classes of basic silting objects of the perfect derived category for $\Lambda$.
Here is the first main theorem.

\begin{maintheorem}[Theorem \ref{sd}]
Let $R$ be a finite dimensional local $K$-algebra and put $\Gamma:=R\otimes_K\Lambda$.
If $\Lambda$ is silting-discrete, then we have a poset isomorphism $\silt\Lambda\to\silt\Gamma$.
In particular, $\Gamma$ is also silting-discrete.
\end{maintheorem}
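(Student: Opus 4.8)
The plan is to show that the exact functor $G:=R\otimes_K-\colon\Kb(\proj\Lambda)\to\Kb(\proj\Gamma)$ realizes the desired isomorphism. First I record the formal properties of $G$. Since $R\otimes_K\Lambda e_i=\Gamma e_i$, the functor $G$ is triangulated and carries $\proj\Lambda$ into $\proj\Gamma$, so it is well defined on perfect complexes (note also $\Gamma$ is again basic, as $\Gamma/\rad\Gamma\cong\Lambda/\rad\Lambda$). For $P,Q\in\proj\Lambda$ one has $\Hom_\Gamma(R\otimes_KP,R\otimes_KQ)\cong R\otimes_K\Hom_\Lambda(P,Q)$, checked on indecomposable projectives where both sides are $R\otimes_K e_i\Lambda e_j$; because $R\otimes_K-$ is exact over the field $K$ this globalizes to a natural isomorphism
\[
\Hom_{\Kb(\proj\Gamma)}(GT,GT'[i])\;\cong\;R\otimes_K\Hom_{\Kb(\proj\Lambda)}(T,T'[i])
\]
for all $T,T'\in\Kb(\proj\Lambda)$ and all $i$. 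As $R\neq0$, this shows at once that $GT$ is presilting iff $T$ is, and that $GT\geq GT'$ iff $T\geq T'$; hence $G$ is an order embedding, in particular injective on isomorphism classes. Generation is inherited because $G$ is triangulated and $G(P_i)=\Gamma e_i$: from $\thick T=\Kb(\proj\Lambda)$ we get $\Gamma e_i\in\thick(GT)$ for every $i$, so $\thick(GT)=\Kb(\proj\Gamma)$. Thus $G$ maps $\silt\Lambda$ into $\silt\Gamma$ as an order embedding.

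Next I would check compatibility with mutation. Since $R$ is local, $R\otimes_KE$ is again local for every local finite dimensional $K$-algebra $E$ (its radical is $\rad R\otimes_KE+R\otimes_K\rad E$, with quotient $(R/\rad R)\otimes_K(E/\rad E)=K$), so $G$ preserves indecomposability; by Krull--Schmidt the indecomposable summands of $GT$ are exactly the $GX$ with $X$ an indecomposable summand of $T$. Applying the exact functor $G$ to the triangle defining an irreducible left mutation $\mu^-_X(T)$, and using the displayed formula to see that left $\add$-approximations are preserved (the defining surjection is merely tensored with $R$), one gets $G(\mu^-_X(T))=\mu^-_{GX}(GT)$, and symmetrically on the right. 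Combined with the previous paragraph this means the mutation neighbours of $GT$ are precisely the $G$-images of the mutation neighbours of $T$: there are no ``extra'' summands upstairs to mutate at. Hence $\operatorname{Im}G$ is closed under irreducible mutation; as $G\Lambda=\Gamma$ and $\Lambda$ is silting-connected (being silting-discrete), $\operatorname{Im}G$ is exactly the connected component of $\Gamma$ in the silting quiver of $\Gamma$, and $G$ is a poset isomorphism onto it.

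The main obstacle is surjectivity, namely ruling out silting objects of $\Gamma$ outside the component of $\Gamma$. Given $U\in\silt\Gamma$, boundedness of $U$ and $\Gamma$ yields integers with $\Gamma[-m]\geq U\geq\Gamma[n]$, so it suffices to show that each interval $[\Gamma[k],\Gamma]$ in $\silt\Gamma$ is finite and contained in $\operatorname{Im}G$: by the theorem of Aihara and Iyama that a finite interval is filled by iterated irreducible mutation, $U$ would then lie in the component of $\Gamma$. The heart is the two-term case, where I would prove that every support $\tau$-tilting $\Gamma$-module has the form $R\otimes_KM$ for some $M\in\sttilt\Lambda$, i.e. that $R\otimes_K-$ induces a bijection $\sttilt\Lambda\to\sttilt\Gamma$. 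This is the genuinely hard, module-theoretic step and is exactly where the locality of $R$ is essential; the leverage is again $\End_\Gamma(R\otimes_KT)\cong R\otimes_K\End_\Lambda(T)$, which forces the $\Gamma$-side $\tau$-tilting theory to mirror that of $\Lambda$. Granting it, one propagates to arbitrary $k$ by silting reduction: the interval $[\Gamma[k],\Gamma]$ is controlled by two-term data over endomorphism algebras $\End_\Gamma(V)\cong R\otimes_K\End_\Lambda(T)$, and each such algebra is $\tau$-tilting finite because $\End_\Lambda(T)$ is (as $\Lambda$ is silting-discrete), making every such interval finite and image-contained.

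Finally, once $G\colon\silt\Lambda\to\silt\Gamma$ is a bijective order embedding it is a poset isomorphism, and it commutes with the shift since $G$ is triangulated, so $G(T[1])=G(T)[1]$. Silting-discreteness is the finiteness of every interval $[V[1],V]$, and for $V=GT$ such an interval corresponds bijectively under $G$ to $[T[1],T]\subseteq\silt\Lambda$, which is finite by hypothesis. Hence $\Gamma$ is silting-discrete. I expect the only serious difficulty to be the surjectivity/two-term step; the functorial formalism, the transfer of the poset structure, and the compatibility with mutation are essentially formal consequences of the Hom-formula and the locality of $R$.
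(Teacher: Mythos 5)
Your functorial setup (the Hom formula $\Hom_{\Kb(\proj\Gamma)}(GT,GT'[i])\cong R\otimes_K\Hom_{\Kb(\proj\Lambda)}(T,T'[i])$, preservation of indecomposability via locality of $R\otimes_KE$, preservation of approximations and hence of irreducible mutations, and the conclusion that $G$ is an order embedding whose image is closed under mutation and contains the whole mutation component of $\Gamma$) coincides with the paper's argument and is correct. The problem is the surjectivity of $G$ onto $\silt\Gamma$, which you yourself flag as ``the genuinely hard, module-theoretic step'' and then assume: you assert without proof that every support $\tau$-tilting $\Gamma$-module is of the form $R\otimes_KM$, and the subsequent propagation to arbitrary intervals $[\Gamma[k],\Gamma]$ via silting reduction is likewise only sketched. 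As written, this is a genuine gap, not a routine verification -- the Hom formula alone does not force a morphism/module on the $\Gamma$-side to descend to the $\Lambda$-side, and nothing in your argument rules out silting objects of $\Gamma$ that are not in the image.

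The paper closes this gap by a different and much shorter route that avoids any two-term or $\tau$-tilting analysis. Suppose $U\in\silt\Gamma$ with $\Gamma\geq U$ lies outside the image of $G$. Since the image contains $\Gamma$, is closed under irreducible left mutation, and never meets $U$, \cite[Proposition 2.36]{AI} produces a strictly descending infinite path $\Gamma=U_0\to U_1\to\cdots$ in $\silt\Gamma$ with $U_i\geq U$ for all $i$; choosing $n$ with $U\geq\Gamma[n]$ and pulling the path back along the order embedding yields infinitely many silting objects in the interval $[\Lambda[n],\Lambda]$ of $\silt\Lambda$, contradicting the silting-discreteness of $\Lambda$. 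Hence every silting object below $\Gamma$ is in the image, and \cite[Theorem 2.4]{AM} gives the silting-discreteness of $\Gamma$; the poset isomorphism then follows because in both categories the partial order is detected by finite paths in the Hasse quiver. If you replace your surjectivity paragraph by this contradiction argument, the rest of your proof goes through essentially verbatim.
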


As an example, we consider the $n\times n$ (upper) triangular matrix algebra $T_n(R)$ over a local algebra $R$, which is actually isomorphic to $R\otimes_KK\overrightarrow{A_n}$.
So, we get a corollary of this theorem (Proposition \ref{polynomial}(1)).

In the context of triangular matrix algebras $T_n(\Lambda)$ over an algebra $\Lambda$ (not necessarily local), it seems to be difficult to understand when $T_n(\Lambda)$ is silting-discrete.
Thus, let us turn our attention to ``two-term'' silting objects.
We say that an algebra is \emph{$\tau$-tilting finite} if there are only finitely many two-term silting objects; see \cite{AIR}.
It is evident that a silting-discrete algebra is $\tau$-tilting finite.
We also know that a representation-finite algebra is $\tau$-tilting finite.
As an analogue of Auslander--Reiten's results in \cite{AR},
we have the second main theorem of this paper.

\begin{maintheorem}[Theorem \ref{2taustfrf}]\label{mt2}
Assume that $\Lambda$ is representation-finite. 
Then we have:
\begin{enumerate}
\item If the Auslander algebra of $\Lambda$ is $\tau$-tilting finite, then so is $T_2(\Lambda)$.
\item If $\Lambda$ is simply-connected, then the converse of (1) holds.
\end{enumerate}
\end{maintheorem}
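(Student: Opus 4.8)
The plan is to translate the whole statement into the language of bricks, invoking the theorem of Demonet--Iyama--Jasso that an algebra is $\tau$-tilting finite if and only if it admits only finitely many isomorphism classes of bricks. Write $M$ for an additive generator of $\mod\Lambda$ and $\Gamma=\End_\Lambda(M)$ for the Auslander algebra, and identify $\mod T_2(\Lambda)$ with the morphism category $\operatorname{Mor}(\mod\Lambda)$, whose objects are the $\Lambda$-linear maps $f\colon X\to Y$ and whose morphisms are commuting squares. Since $\Lambda$ is representation-finite we have $\add M=\mod\Lambda$, so $\Hom_\Lambda(M,-)$ realises an additive equivalence $\mod\Lambda\simeq\proj\Gamma$; composing with the cokernel functor yields
\[
F\colon \operatorname{Mor}(\mod\Lambda)\longrightarrow\mod\Gamma,\qquad (X\xrightarrow{f}Y)\longmapsto\Coker\bigl(\Hom_\Lambda(M,f)\bigr),
\]
which is dense, because every $\Gamma$-module has a projective presentation with terms in $\proj\Gamma=\Hom_\Lambda(M,\mod\Lambda)$. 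This $F$ is the bridge between the two sides.

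For (1) I argue by contraposition, assuming $T_2(\Lambda)$ has infinitely many bricks. The key computation is that for any object $N=(X\xrightarrow{f}Y)$ the canonical map $\End_{T_2(\Lambda)}(N)\to\End_\Gamma(FN)$ is surjective, with kernel the null-homotopic pairs $(hf,fh)$ for $h\colon Y\to X$. Hence if $N$ is a brick with $FN\neq 0$, then $\End_\Gamma(FN)$ is a nonzero quotient of $K$, so $FN$ is again a brick; and since an indecomposable presentation with $FN\neq 0$ is forced to be minimal, $F$ is injective up to isomorphism on bricks of nonzero image. Finally $FN=0$ is equivalent to $\Hom_\Lambda(M,f)$ being an epimorphism, which (as $Y\in\add M$) forces $f$ to be a split epimorphism; the only bricks of this shape are $(B\xrightarrow{\mathrm{id}}B)$ and $(B\to 0)$ for bricks $B$ of $\Lambda$, of which there are finitely many. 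Thus infinitely many bricks of $T_2(\Lambda)$ force infinitely many bricks of $\Gamma$, which is the contrapositive of (1).

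For (2) I again argue by contraposition and must manufacture infinitely many bricks of $T_2(\Lambda)$ out of infinitely many bricks of $\Gamma$. Each $\Gamma$-brick $C$ has a minimal projective presentation, which under $\proj\Gamma\simeq\mod\Lambda$ is a map $f\colon X\to Y$, that is, an object $N_C$ of $\operatorname{Mor}(\mod\Lambda)=\mod T_2(\Lambda)$ with $FN_C=C$; distinct $C$ give pairwise non-isomorphic $N_C$ automatically, since $F$ is a functor. Because $\Lambda$ is representation-finite, infinitely many of the $C$ share the same pair $(X,Y)$ and so come from an infinite family of pairwise non-isomorphic morphisms $f$. The difficulty, and it is the whole content of the converse, is that $N_C$ need not be a brick even though $C$ is: by the kernel computation of the previous paragraph, $\End_{T_2(\Lambda)}(N_C)$ can exceed $\End_\Gamma(C)=K$ by the homotopies coming from maps $h\colon Y\to X$.

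This is exactly where simple-connectedness enters. A representation-finite simply-connected algebra is representation-directed and standard, its module category being the mesh category of its Auslander--Reiten quiver; directedness forbids cycles of nonzero non-isomorphisms between indecomposables, so along a suitably chosen subfamily the backward spaces $\Hom_\Lambda(Y,X)$ that govern the homotopies can be arranged to vanish, whence $\End_{T_2(\Lambda)}(N_C)=K$ and the $N_C$ are genuine bricks. This yields infinitely many bricks of $T_2(\Lambda)$, the contrapositive of (2). I expect the verification that directedness (equivalently, the standard mesh-combinatorics) annihilates these homotopies---that is, that the infinite family can be selected with $\Hom_\Lambda(Y,X)=0$---to be the main technical hurdle, and the precise point at which the hypothesis of (2) is genuinely used.
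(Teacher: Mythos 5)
Your argument for part (1) is essentially the paper's: both rest on the Auslander--Reiten functor sending $(X\xrightarrow{f}Y)$ to $\Coker\Hom_\Lambda(M,f)$ being full and dense, on the observation that fullness carries bricks (other than the finitely many of the shapes $(B,B,\identity)$ and $(B,0,0)$) to bricks, and on injectivity on objects via minimality of the resulting presentations; Demonet--Iyama--Jasso then converts finiteness of bricks into $\tau$-tilting finiteness. One inaccuracy worth flagging: the kernel of $\End_{T_2(\Lambda)}(N)\to\End_\Gamma(\Phi N)$ is not just the pairs $(hf,fh)$; it consists of all pairs $(\alpha,\beta)$ with $\beta=fh$ for some $h\colon Y\to X$ and $f\alpha=fhf$, so $\alpha$ may differ from $hf$ by any endomorphism of $X$ whose image lies in $\ker f$. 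This is harmless for (1), since you only use surjectivity onto a quotient of $K$, but it undermines your plan for (2).

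For part (2) your route diverges from the paper's and has a genuine gap. First, the claim that infinitely many bricks $C$ of $\Gamma$ share the same pair $(X,Y)$ does not follow from representation-finiteness of $\Lambda$: there are finitely many indecomposables but infinitely many isomorphism classes of modules once multiplicities are allowed, and bricks of $\Gamma$ need not have bounded dimension, so this step would require a brick--Brauer--Thrall type input that you do not supply. Second, and decisively, the step you yourself flag as the ``main technical hurdle''---arranging $\Hom_\Lambda(Y,X)=0$ along an infinite subfamily---is the entire content of the converse and is not carried out; worse, by the corrected kernel computation above, even $\Hom_\Lambda(Y,X)=0$ does not force $N_C$ to be a brick, since one also needs the maps $X\to\ker f$ to vanish, and $\ker f\neq0$ whenever $C$ has projective dimension at least $2$ over $\Gamma$. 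Directedness of $\Lambda$ does not obviously kill either space. The paper proves (2) by a completely different mechanism that avoids bricks: $\tau$-tilting finiteness of $T_2(\Lambda)$ excludes finite convex subcategories that are concealed of extended Dynkin type; for simply-connected $\Lambda$ the theorem of Leszczynski--Skowronski then forces $T_2(\Lambda)$ to be representation-finite; and Auslander--Reiten's theorem transfers representation-finiteness to the Auslander algebra, which is a fortiori $\tau$-tilting finite. You should either adopt that argument or actually produce the infinite family of bricks of $T_2(\Lambda)$, which your sketch does not yet do.
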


We give sufficient conditions for algebras to be $\tau$-tilting infinite. In particular, the converse of Theorem \ref{mt2}(1) is not necessarily true. 

\begin{maintheorem}[Theorem \ref{necessary}, Proposition \ref{third},  and Example \ref{ce}]
The following hold:
\begin{enumerate}
\item Assume that the Gabriel quiver of $\Lambda$ has no loop.
If the separated quiver of $\Lambda$ has a connected component which is not of type $A_n$, then $T_2(\Lambda)$ is $\tau$-tilting infinite.
\item The third triangular matrix algebra ${T_2}^3(\Lambda)$ is $\tau$-tilting infinite.
\item There is a representation-finite algebra $\Gamma$ such that $T_2(\Gamma)$ is $\tau$-tilting finite and the Auslander algebra of $\Gamma$ is $\tau$-tilting infinite. 
\end{enumerate}
\end{maintheorem}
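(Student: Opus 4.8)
My plan is to run all three parts off two standard reductions together with the classification of $\tau$-tilting finite radical square zero algebras. First, $\tau$-tilting finiteness descends along both quotients and idempotent truncations (Demonet--Iyama--Jasso): if $A$ is $\tau$-tilting finite then so are $A/I$ and $eAe$. Contrapositively, exhibiting a $\tau$-tilting \emph{infinite} quotient or truncation of $A$ proves that $A$ itself is $\tau$-tilting infinite, and this is the only engine I use to prove infiniteness. Second, I will use the fact (Adachi) that a radical square zero algebra $KQ/\rad^2$ is $\tau$-tilting finite if and only if its separated quiver $Q^s$---vertices $Q_0\sqcup Q_0'$, one arrow $i\to j'$ for every arrow $i\to j$ of $Q$---is a disjoint union of Dynkin quivers (types $A$, $D$, $E$); recall that a connected component is of type $A_n$ precisely when it is a path, i.e.\ is acyclic with all valencies at most $2$.

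For (1) I would pass to the radical square zero quotient $B:=T_2(\Lambda)/\rad^2$. If $Q$ denotes the loopless Gabriel quiver of $\Lambda$, then the Gabriel quiver $Q'$ of $T_2(\Lambda)=\Lambda\otimes_K K\overrightarrow{A_2}$ consists of two copies $(-,1),(-,2)$ of $Q$ joined by one connecting arrow $(v,1)\to(v,2)$ for each vertex $v$. The key point is a direct comparison of $(Q')^s$ with $Q^s$: as a graph, $(Q')^s$ is two disjoint copies of $Q^s$ (one on the layer-$1$ vertices, one on the layer-$2$ vertices) together with one extra edge joining $(v,1)$ to $(v,2)'$ for each $v$, so each layer-$1$ unprimed vertex and each layer-$2$ primed vertex has its valency raised by exactly one. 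Now if $Q^s$ has a component that is not of type $A_n$, that component either contains a cycle or has a vertex of valency $\ge 3$. In the first case a copy of that cycle already lives inside $(Q')^s$; in the second case the valency bump upgrades a valency-$3$ vertex to a valency-$4$ vertex, producing a $\widetilde{D}_4$ (a four-pointed star). In either case $(Q')^s$ has a non-Dynkin component, so $B$---and therefore $T_2(\Lambda)$---is $\tau$-tilting infinite. The only real work is the valency bookkeeping in $(Q')^s$ and checking that ``not of type $A_n$'' is exactly the cycle/branch dichotomy.

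For (2) I would reduce ${T_2}^3(\Lambda)=\Lambda\otimes_K(K\overrightarrow{A_2})^{\otimes 3}$ to the commutative cube algebra $C:=(K\overrightarrow{A_2})^{\otimes 3}$. Truncating by a primitive idempotent $e$ of $\Lambda$ gives $e\Lambda e\otimes_K C$, and killing $\rad(e\Lambda e)$ (which leaves $e\Lambda e/\rad(e\Lambda e)\cong K$) exhibits $C$ as a quotient of this truncation; by the first reduction it suffices to show that $C$ is $\tau$-tilting infinite. For this I pass to $C/\rad^2$ and compute the separated quiver of the commutative $3$-cube: it is a disjoint union of two copies of $D_4$, one hexagon of type $\widetilde{A}_5$, and two isolated vertices. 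The hexagon is non-Dynkin, so $C$ is $\tau$-tilting infinite. (The same computation for the square ${T_2}^2$ produces only type-$A$ components, which explains why three iterations are required.) Pinning down this hexagon is the main obstacle of this part.

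For (3) I would take $\Gamma:=K[x]/(x^n)$ with $n$ chosen large enough that the Auslander algebra of $\Gamma$ is $\tau$-tilting infinite, which happens for all large $n$ by the known study of these Auslander algebras. Then $\Gamma$ is representation-finite, and the finiteness of $T_2(\Gamma)$ is immediate from the first main theorem: since $\Gamma$ is local and $K\overrightarrow{A_2}$ is silting-discrete, Theorem \ref{sd} applied to $T_2(\Gamma)=\Gamma\otimes_K K\overrightarrow{A_2}$ shows that $T_2(\Gamma)$ is silting-discrete, hence $\tau$-tilting finite. Thus the example reduces entirely to verifying that the Auslander algebra of $K[x]/(x^n)$ is $\tau$-tilting infinite for the chosen $n$---ideally by producing an explicit non-Dynkin radical square zero quotient of it---and I expect this verification, together with the two separated-quiver computations in (1) and (2), to be where essentially all the effort lies.
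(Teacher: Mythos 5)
Your proposal rests on the ``engine'' that a radical-square-zero algebra is $\tau$-tilting finite if and only if its separated quiver is a disjoint union of Dynkin quivers. That is Gabriel's criterion for \emph{representation}-finiteness; Adachi's criterion for $\tau$-tilting finiteness is genuinely weaker and, in the form used in this paper, requires the offending Euclidean configuration to consist of pairwise \emph{distinct characters} (no vertex $i$ occurring together with its copy $i'$). The two notions really differ: the local algebra with two loops modulo $\rad^2$ has separated quiver of type $\widetilde{A_1}$ but is $\tau$-tilting finite, and one can build loopless examples (e.g.\ $Q$ with arrows $j\to a$, $j\to d$, $b\to a$, $b\to j$, $c\to j$, $c\to d$) whose separated quiver has an $\widetilde{A_5}$ component containing both $j$ and $j'$ while $KQ/\rad^2$ is $\tau$-tilting finite. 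This breaks your part (1) in the cycle case: the cycle you inherit from $Q^{\rm sp}$ inside the separated quiver of $T_2(\Lambda)$ may contain a vertex and its copy, and then it certifies nothing. The paper avoids this by working with a cycle in $Q$ itself and using the connecting arrows of $T_2(\Lambda)$ to manufacture a $\widetilde{D_5}$ (resp.\ $\widetilde{E_6}$ in the branch case) all of whose vertices are distinct characters. Your branch case is salvageable, since the degree-$4$ star you produce does happen to have distinct characters, and your part (2) computation of the cube's separated quiver ($D_4\sqcup D_4\sqcup\widetilde{A_5}\sqcup$ two points) agrees with the paper's and the hexagon there has distinct characters $2,3,4,5,6,7$ --- but you must invoke the correct criterion and check distinctness rather than argue component-by-component.

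Part (3) is wrong as proposed. The Auslander algebra of $K[x]/(x^n)$ is $\tau$-tilting finite for \emph{every} $n$ (its support $\tau$-tilting modules have been completely classified by Iyama--Zhang and form a finite set), so no choice of $n$ yields the desired example; note also that your suggested verification route cannot work, since the radical-square-zero quotient of this Auslander algebra has separated quiver equal to two disjoint paths of type $A_n$, hence is Dynkin. The paper instead takes $\Gamma$ to be the radical-square-zero algebra on the quiver with a loop at $1$ and arrows $1\to 2$, $1\to 3$: it is representation-finite, $T_2(\Gamma)$ is $\tau$-tilting finite because it is a quotient of $T_2(K\overrightarrow{A_3})\otimes_KK[x]/(x^2)$, which is silting-discrete by Theorem \ref{sd}, and the Auslander algebra is shown to be $\tau$-tilting infinite by locating a Kronecker algebra as a subquotient. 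Your reduction of (2) to the cube and your use of Theorem \ref{sd} for the finiteness of $T_2$ of a local algebra are fine and match the paper's strategy, but both (1) and (3) need to be repaired along the lines above.
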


We classify algebras $\Lambda$ with $T_n(\Lambda)$ $\tau$-tilting finite.
Here is the fourth main theorem.

\begin{maintheorem}[Theorem \ref{s3} and \ref{s2}]
Let $\Lambda$ be a finite dimensional nonlocal algebra over an algebraically closed field whose Gabriel quiver has no loop and $n\geq3$.
Then the following are equivalent:
\begin{enumerate}
\item $T_n(\Lambda)$ is $\tau$-tilting finite;
\item One of the following cases holds:
\begin{enumerate}
%\item $\Lambda$ is local;
\item $n=4$ and $\Lambda$ is the path algebra of type $A_2$;
\item $n=3$ and $\Lambda$ is a Nakayama algebra with precisely 2 simple modules;
\item $n=3$ and $\Lambda$ is a Nakayama algebra with radical square zero.
\end{enumerate}
\end{enumerate}
\end{maintheorem}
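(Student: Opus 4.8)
The plan is to prove the two implications separately, using throughout the identification $T_n(\Lambda)\cong\Lambda\otimes_K K\overrightarrow{A_n}$ recorded above, together with two inheritance properties of $\tau$-tilting finiteness: it passes to every quotient algebra and to every idempotent truncation $e\Lambda e$ (equivalently, since $\tau$-tilting finiteness is the same as brick-finiteness, bricks restrict well along both operations). Because $K\overrightarrow{A_m}$ is an idempotent truncation of $K\overrightarrow{A_n}$ for $m\le n$, tensoring with $\Lambda$ shows that $T_m(e\Lambda e)$ is an idempotent truncation of $T_n(\Lambda)$ for every idempotent $e\in\Lambda$ and every $m\le n$. Two consequences organize the whole proof: first, $T_{n-1}(\Lambda)$ is a truncation of $T_n(\Lambda)$, so the statement ``$T_n(\Lambda)$ is $\tau$-tilting finite'' is downward closed in $n$, and it suffices to compute, for each admissible $\Lambda$, the threshold $N(\Lambda)$ below which finiteness holds; second, $T_2(\Lambda)$ and $T_3(\Lambda)$ are both truncations of $T_n(\Lambda)$, so I may freely invoke the $T_2$-level obstruction of Theorem \ref{necessary}(1).

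For the sufficiency direction $(2)\Rightarrow(1)$ I would verify finiteness case by case. In case (a) the algebra $T_4(K\overrightarrow{A_2})=K\overrightarrow{A_2}\otimes_K K\overrightarrow{A_4}$ is the equioriented commutative ladder of length $4$, which is representation-finite, and a representation-finite algebra is automatically $\tau$-tilting finite; the same remark handles $T_3(K\overrightarrow{A_2})$, the ladder of length $3$. For the remaining Nakayama families in (b) and (c) I would show directly that $T_3(\Lambda)=\Lambda\otimes_K K\overrightarrow{A_3}$ is brick-finite by an explicit determination of its indecomposable bricks from the uniserial structure of $\Lambda$; the two simple modules, respectively the vanishing of $\rad^2$, is exactly what keeps this list finite.

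For the necessity direction $(1)\Rightarrow(2)$ I would use downward closure to split the problem into (i) deciding when $T_3(\Lambda)$ is $\tau$-tilting finite, and (ii) deciding, among those $\Lambda$, when the threshold $N(\Lambda)$ reaches $4$ or more. For (i), $\tau$-tilting finiteness of $T_3(\Lambda)$ forces that of $T_2(\Lambda)$, and since the Gabriel quiver has no loop, the contrapositive of Theorem \ref{necessary}(1) forces every connected component of the separated quiver of $\Lambda$ to be of type $A$; this rules out multiple arrows, vertices of in- or out-degree $\ge 3$, and separated cycles, cutting the admissible Gabriel quivers to a short explicit list. I would then eliminate from this list every non-Nakayama quiver and every Nakayama algebra violating the radical or simple-count conditions by producing an infinite family of bricks in $T_3(\Lambda)$, the natural source being Proposition \ref{third}: in each forbidden configuration (a vertex of degree $2$, or a path of length $2$ surviving in the radical) one realizes $T_2^{\,3}(\Lambda')$, which is $\tau$-tilting infinite, as an idempotent truncation or quotient of $T_3(\Lambda)$. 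This pins $\Lambda$ down to the Nakayama algebras of (b) and (c). For (ii), the commutative ladder $K\overrightarrow{A_2}\otimes_K K\overrightarrow{A_n}$ is representation-finite precisely for $n\le4$ and $\tau$-tilting infinite for $n\ge5$, so $N(K\overrightarrow{A_2})=4$; every remaining admissible Nakayama algebra must instead be shown to have $T_4(\Lambda)$ already $\tau$-tilting infinite, i.e. threshold exactly $3$, which completes the match with the list.

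The main obstacle is the elimination in step (i) together with the negative part of (ii), which are the genuinely $n=3$ and $n=4$ phenomena. The separated-quiver bound from $T_2$ is insensitive to relations and to orientation, so it does not by itself distinguish, say, $K\overrightarrow{A_3}$ (which must be $\tau$-tilting infinite at $n=3$) from the admissible radical-square-zero Nakayama algebras; moreover the reduction to a commutative ladder via a $K\overrightarrow{A_2}$-corner produces only the finite ladders of length $3$ or $4$ and so has no force at $n=3$, nor at $n=4$ for the longer or cyclic Nakayama algebras. The crux is therefore the precise embedding of an explicitly $\tau$-tilting infinite algebra, such as $T_2^{\,3}(\Lambda')$, as a truncation or quotient of $T_3(\Lambda)$ exactly for the non-Nakayama survivors and the Nakayama algebras with three or more simples or nonzero radical square, and of another such algebra into $T_4(\Lambda)$ for every admissible Nakayama $\Lambda\neq K\overrightarrow{A_2}$. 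The cyclic Nakayama algebras with two simple modules, which admit no $K\overrightarrow{A_2}$-corner, will resist every reduction argument and will have to be dispatched by a dedicated brick count.
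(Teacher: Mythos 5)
Your overall architecture (reduce along quotients and corners $e\Lambda e$, note that $T_m(e\Lambda e)$ is a corner of $T_n(\Lambda)$ for $m\le n$, bound $n\le 4$ via the length-$5$ commutative ladder, and use Theorem \ref{necessary} to constrain the separated quiver) matches the paper's skeleton, and you correctly identify where the real difficulties sit. But the two tools that actually carry the paper's proof at exactly those difficult spots are missing, and your proposed substitutes do not work. First, to eliminate the non-Nakayama survivors ($\bullet\to\bullet\leftarrow\bullet$ and $\bullet\leftarrow\bullet\to\bullet$) and the Nakayama algebras with a surviving path of length $2$ at $n=3$, the paper does not embed ${T_2}^3(\Lambda')$ anywhere: it invokes the Happel--Vossieck list to exhibit a tame concealed factor algebra of type $\widetilde{E_7}$ inside $T_3(A)$, and again inside $T_4$ of the $2$-simple cyclic radical-square-zero Nakayama algebra. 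Your replacement --- realizing ${T_2}^3(\Lambda')$ as a quotient or corner of $T_3(\Lambda)$ --- is not available: ${T_2}^3(K)$ has $8$ simples and a vertex of out-degree $3$ in its Gabriel quiver, whereas $T_3(A)=A\otimes_K K\overrightarrow{A_3}$ is a $9$-vertex grid whose quotients and one-vertex corners still have all out-degrees at most $2$; there is no surjection $T_3(A)\twoheadrightarrow {T_2}^3(K)$ and no corner isomorphic to it. Proposition \ref{third} genuinely has no force at $n=3$ here, so this elimination step is a gap.

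Second, the case you defer to a ``dedicated brick count'' --- cyclic Nakayama algebras with two simple modules --- is not dispatchable that way: for Loewy length $\ge 3$ the algebra $T_3(\Lambda)$ is representation-infinite, and you would have to control bricks uniformly over the infinite family of such $\Lambda$ in both directions (finiteness of $T_3(\Lambda)$ and infiniteness of $T_4(\Lambda)$). The paper's mechanism is Lemma \ref{2rad2}: the element $z=xy+yx$ is central and lies in the radical, so $zI$ generates an ideal of $T_n(\Lambda)$ with quotient $T_n(\Lambda/\rad^2\Lambda)$, and \cite[Theorem 11]{EJR} gives $\sttilt T_n(\Lambda)\simeq\sttilt T_n(\Lambda/\rad^2\Lambda)$. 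This reduction to the radical-square-zero case is what makes case (b) of the statement work at $n=3$ (finiteness then follows from representation-finiteness via \cite[Theorem 6.1]{LS}, not from a hand count of bricks) and what kills $n=4$ for $\Lambda\ne K\overrightarrow{A_2}$. Relatedly, your sufficiency argument for case (c) should also lean on \cite[Theorem 6.1]{LS} (representation-finiteness of $T_3$ of a radical-square-zero Nakayama algebra) rather than an unexecuted brick enumeration, and the $n=4$ infiniteness for linear Nakayama algebras with $\ge 3$ simples requires Wang's theorem on strongly simply-connected algebras --- another external input your outline does not supply.
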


This theorem tells us the fact that for a simply-connected algebra $\Lambda$ and $n\geq3$, $T_n(\Lambda)$ is $\tau$-tilting finite if and only if it is representation-finite (Corollary \ref{sc3}).

Finally, let us go back to the study on the silting-discreteness of $T_n(\Lambda)$.
Although it might be very hard to classify algebras $\Lambda$ with $T_n(\Lambda)$ silting-discrete in general, we try it for a radical-square-zero linear Nakayama algebra $\Lambda$.
Here is the last main theorem.

\begin{maintheorem}[Theorem \ref{lsd}]
Let $\Lambda$ be a radical-square-zero linear Nakayama algebra with $r$ simple modules.
Then $T_n(\Lambda)$ is silting-discrete if and only if one of the following cases occurs:
(i) $n=1$;
(ii) $r=1$;
(iii) $n=2$ and $1<r\leq4$; 
(iv) $1<n\leq4$ and $r=2$.
\end{maintheorem}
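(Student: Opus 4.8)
The plan is to analyze the silting-discreteness of $T_n(\Lambda)$ along the grid of parameters $(n,r)$, using throughout the identification $T_n(\Lambda)\cong\Lambda\otimes_K K\overrightarrow{A_n}$ together with two guiding principles: (P1) a silting-discrete algebra is $\tau$-tilting finite, so any case in which $T_n(\Lambda)$ is $\tau$-tilting infinite is excluded immediately; and (P2) silting-discreteness is a derived invariant, while a representation-finite piecewise hereditary algebra is silting-discrete by \cite{Ai}. Since $K\overrightarrow{A_n}$ and $KQ$ are derived equivalent for every orientation $Q$ of the diagram $A_n$ (via reflection tilting, which is given by a two-sided tilting complex), tensoring over $K$ with $\Lambda$ shows that $T_n(\Lambda)$ is derived equivalent to $\Lambda\otimes_K KQ$ for each such $Q$; I will use this freedom in both directions.

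For the sufficiency (``if'') direction I would dispatch the two infinite families first. When $n=1$ we have $T_1(\Lambda)=\Lambda$, a radical-square-zero linear Nakayama algebra, which is representation-finite and piecewise hereditary of type $A$, hence silting-discrete by (P2); when $r=1$ we have $\Lambda=K$ and $T_n(K)=K\overrightarrow{A_n}$ is hereditary of type $A$, again silting-discrete. The remaining positive cases are the finitely many algebras $T_2(\Lambda)$ with $r\in\{2,3,4\}$ and $T_3(\Lambda),T_4(\Lambda)$ with $r=2$. For $r=2$ the factor $\Lambda=K\overrightarrow{A_2}$ is hereditary, so $T_n(\Lambda)=K\overrightarrow{A_2}\otimes_K K\overrightarrow{A_n}$ is the commutative $2\times n$ grid, which for $n\le4$ is representation-finite and piecewise hereditary of Dynkin type, so (P2) applies. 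For the two remaining algebras $T_2(\Lambda)$ with $r\in\{3,4\}$, which carry genuine radical-square-zero relations and are no longer pure grids, I would verify silting-discreteness directly: each is $\tau$-tilting finite, and I would either exhibit a derived equivalence to a hereditary algebra of Dynkin type, or run the silting-discreteness criterion by checking that the endomorphism algebra $\End(M)$ of every two-term silting object $M$ remains $\tau$-tilting finite, a finite verification since these algebras are small.

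For the necessity (``only if'') direction I would exclude all remaining $(n,r)$ with $n,r\ge2$. The cases $n\ge5$ (any $r\ge2$) and $n=4$, $r\ge3$ fall to (P1): by the classification in Theorems \ref{s3} and \ref{s2}, these $T_n(\Lambda)$ are $\tau$-tilting infinite, hence not silting-discrete. The case $n=2$ lies outside the range of Theorems \ref{s3} and \ref{s2}, and the separated-quiver criterion (Theorem \ref{necessary}) is inconclusive here because the separated quiver of $\Lambda$ is a disjoint union of type-$A$ components; so for $n=2$, $r\ge5$ I would instead invoke Theorem \ref{2taustfrf}: as $\Lambda$ is simply-connected, $T_2(\Lambda)$ is $\tau$-tilting finite if and only if the Auslander algebra of $\Lambda$ is, and I would show this Auslander algebra is $\tau$-tilting infinite precisely when $r\ge5$, again excluding these cases by (P1).

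The genuinely delicate cases are $n=3$, $r\ge3$, where $T_3(\Lambda)$ \emph{is} $\tau$-tilting finite (it satisfies condition (c) of the classification), so (P1) gives nothing and one must refute silting-discreteness directly. Here I would exploit (P2): choosing a non-linear orientation $Q$ of $A_3$, the algebra $\Lambda\otimes_K KQ$ is derived equivalent to $T_3(\Lambda)$, and I would prove $\Lambda\otimes_K KQ$ is $\tau$-tilting infinite (for instance by exhibiting an explicit $\tau$-tilting infinite quotient, or by computing its representation type as a reduced grid algebra). Since silting-discreteness is a derived invariant and entails $\tau$-tilting finiteness, a derived equivalence class containing a $\tau$-tilting infinite algebra contains no silting-discrete algebra; hence $T_3(\Lambda)$ is not silting-discrete. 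I expect this last step---establishing $\tau$-tilting infiniteness of the reoriented algebra for the small values $r=3,4$, where $\tau$-tilting finiteness of $T_3(\Lambda)$ itself holds---to be the main obstacle, since it is exactly the point where silting-discreteness and $\tau$-tilting finiteness diverge and the cheap obstruction (P1) is unavailable.
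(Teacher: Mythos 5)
Your overall architecture---derived invariance of silting-discreteness, $\tau$-tilting infiniteness as the obstruction in the negative cases, piecewise hereditariness of Dynkin type in the positive cases---is exactly the paper's, and you correctly isolate $n=3$, $r\geq3$ as the case where silting-discreteness and $\tau$-tilting finiteness genuinely diverge. But you are missing the one observation that makes the whole proof collapse into results already established: the radical-square-zero linear Nakayama algebra $\Lambda$ is itself derived equivalent to the hereditary algebra $K\overrightarrow{A_r}$. Tensoring this (two-sided) equivalence with $K\overrightarrow{A_n}$ shows $T_n(\Lambda)$ is derived equivalent to the commutative grid $T_n(K\overrightarrow{A_r})\simeq T_r(K\overrightarrow{A_n})$, and every case of the theorem is then read off: for $n,r\geq3$ the grid is $\tau$-tilting infinite by Theorem \ref{s3} (since $K\overrightarrow{A_r}$ is \emph{not} radical square zero), for $\min(n,r)=2$ one is in the commutative-ladder situation of Example \ref{CL} and Ladkani's ADE chain $A_2,D_4,E_6,E_8$ \cite{L}, and the cases $n=1$ or $r=1$ are hereditary of type $A$ up to derived equivalence. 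You only ever reorient the $K\overrightarrow{A_n}$ factor, which cannot remove the zero relations from $\Lambda$, and this forces you into three unexecuted verifications: the positive cases $n=2$, $r\in\{3,4\}$; the negative case $n=2$, $r\geq5$ via the Auslander algebra of $K\overrightarrow{A_r}/\rad^2K\overrightarrow{A_r}$ (a computation the paper never needs); and, most seriously, the case $n=3$, $r\geq3$.

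For that last case your plan is to show that $\Lambda\otimes_K KQ$ is $\tau$-tilting infinite for a non-linear orientation $Q$ of $A_3$. This is not backed by any result in the paper (Theorem \ref{s3} only covers genuine triangular matrix algebras $T_n(-)$), and there is no a priori reason this particular derived-equivalent representative must be $\tau$-tilting infinite---$\tau$-tilting finiteness is not a derived invariant, which is precisely the phenomenon at play here, so one must choose the representative carefully. The paper's choice, $T_3(K\overrightarrow{A_r})$, is $\tau$-tilting infinite by Theorem \ref{s3} with no further work. As it stands, the step you yourself flag as ``the main obstacle'' is a genuine gap, and the missing idea that closes it is the derived equivalence $\Lambda\simeq_{\mathrm{der}}K\overrightarrow{A_r}$.
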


Throughout this paper, algebras are always assumed to be finite dimensional over an algebraically closed field $K$.
Modules are finite dimensional and right modules.
For an algebra $\Lambda$, we denote by $\mod\Lambda\ (\proj\Lambda)$ the category of (projective) modules over $\Lambda$.
The perfect derived category of $\Lambda$ is denoted by $\Kb(\proj\Lambda)$.

\section{A new example of silting-discrete algebras}

In this section, we give a new construction of silting-discrete algebras.
Let $\Lambda$ be a ring-indecomposable basic algebra.

Let $\T$ be a triangulated category which is Krull--Schmidt, $K$-linear and Hom-finite.  
We say that an object $T$ is \emph{silting} if it satisfies $\Hom_\T(T, T[i])=0$ for any $i>0$ and $\T=\thick T$.
Here, $\thick T$ stands for the smallest thick subcategory of $\T$ containing $T$.
It is known that the set $\silt\T$ of isomorphism classes of basic silting objects of $\T$ has a partial order $\geq$ and actions $\mu^\pm$ of \emph{silting mutation}; see \cite{AI} for details.
\if0
For objects $T$ and $U$, we write $T\geq U$ if $\Hom_\T(T, U[i])=0$ for every $i>0$.
It is known that the relation $\geq$ gives a partial order on  the set $\silt\T$ of isomorphism classes of basic silting objects of $\T$ \cite[Theorem 2.11]{AI}.

We now assume that $\T$ is $K$-linear and Hom-finite.
Then, we can apply silting mutation to any silting object \cite[Theorem 2.31]{AI}.
Let $T$ be a (basic) silting object of $\T$ with decomposition $T=X\oplus M$ (not necessarily $X$ is indecomposable).
First, take a (minimal) left $\add M$-approximation $f:X\to M'$ of $X$; it is always possible because $\T$ is Hom-finite.
Extending $f$ to the triangle $X\xrightarrow{f}M'\to Y\to X[1]$,
we get a new object $\mu_X^-(T):=Y\oplus M$, and call it the \emph{left mutation} of $T$ at $X$.
Dually, define the \emph{right mutation} $\mu_X^+(T)$ of $T$ at $X$.
Then, left and right mutations are again silting.

The \emph{silting quiver} of $\T$ is constructed as follows:
The set of vertices is $\silt\T$, and draw an arrow $T\to U$ whenever $U$ is a left mutation of $T$ at its indecomposable summand, or equivalently $T$ is a right mutation of $U$ at its indecomposable summand.
By \cite[Theorem 2.35]{AI}, the silting quiver is nothing but the Hasse quiver of the poset $\silt\T$.
An interesting question is when the silting quiver is connected;
we say that $\T$ is \emph{silting-connected} if its silting quiver is connected.
\fi

A triangulated category $\T$ is said to be \emph{silting-discrete} if it admits a silting object $T$, and for any $n>0$ there are only finitely many (basic) silting objects $U$ satisfying $T\geq U\geq T[n]$.
We obtain from \cite[Corollary 3.9]{Ai} that if $\T$ is silting-discrete, then the Hasse quiver of the poset $\silt\T$ is connected; namely, it is \emph{silting-connected}.

When $\T=\Kb(\proj\Lambda)$ for an algebra $\Lambda$, we write $\silt\T$ by $\silt\Lambda$ and say that $\Lambda$ is \emph{silting-discrete} if $\T$ is silting-discrete.
Here is a new example of silting-discrete algebras.

\begin{theorem}\label{sd}
%Let $\Lambda$ be a silting-discrete algebra and put $\Gamma:=K[x]/(x^2)\otimes_K\Lambda$.
Let $R$ be a local algebra and $\Lambda$ a silting-discrete algebra.
Put $\Gamma:=R\otimes_K\Lambda$.
Then we have a poset isomorphism $\silt\Lambda\to\silt\Gamma$.
In particular, $\Gamma$ is also silting-discrete.
\end{theorem}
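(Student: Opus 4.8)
The plan is to construct the isomorphism from the exact functor $F:=R\otimes_K(-)\colon\Kb(\proj\Lambda)\to\Kb(\proj\Gamma)$. Since $R$ is local it has no nontrivial idempotents, so a complete set of primitive orthogonal idempotents of $\Gamma=R\otimes_K\Lambda$ is $\{1\otimes e_i\}$, and $F$ carries the indecomposable projective $e_i\Lambda$ to the indecomposable projective $(1\otimes e_i)\Gamma=R\otimes_Ke_i\Lambda$, exhausting all indecomposable projectives of $\Gamma$. Because $R$ is $K$-flat, the identification $\Hom_\Gamma(R\otimes_KP,R\otimes_KQ)\cong R\otimes_K\Hom_\Lambda(P,Q)$ on $\proj\Lambda$ is compatible with composition, and passing to total $\Hom$-complexes and commuting $R\otimes_K(-)$ with cohomology gives the key Hom-formula
\[
  \Hom_{\Kb(\proj\Gamma)}(FX,FY[i])\;\cong\;R\otimes_K\Hom_{\Kb(\proj\Lambda)}(X,Y[i])\qquad(i\in\Z),
\]
for all $X,Y\in\Kb(\proj\Lambda)$, which will drive everything.

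From this formula I would read off the formal properties. As $R\neq0$, the functor $R\otimes_K(-)$ is faithful on $K$-spaces, so $FT$ is presilting iff $T$ is, and $T\geq U\Leftrightarrow FT\geq FU$; thus $F$ preserves and reflects the partial order. Setting $i=0$ gives an algebra isomorphism $\End_{\Kb(\proj\Gamma)}(FT)\cong R\otimes_K\End_{\Kb(\proj\Lambda)}(T)$, and since a tensor product of two finite-dimensional local $K$-algebras over the algebraically closed field $K$ is again local, $F$ preserves indecomposability. The base-change functor $G:=(-)\otimes_\Gamma\Lambda$ along the surjection $\Gamma\to\Gamma/(\rad R\otimes_K\Lambda)=\Lambda$ satisfies $GF\cong\identity$, so $F$ reflects isomorphisms; hence $F$ sends a basic silting object to a basic one (it is presilting, its summands stay pairwise non-isomorphic, and it generates because $F\Lambda=\Gamma$). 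Applying $R\otimes_K(-)$ to a minimal left approximation preserves the defining surjectivity of $\Hom$, so $F$ intertwines irreducible silting mutation, $F(\mu^-_{T_j}(T))\cong\mu^-_{FT_j}(FT)$. Altogether $F\colon\silt\Lambda\to\silt\Gamma$ is a well-defined, injective, order-preserving and order-reflecting map whose image is closed under mutation and contains $\Gamma=F\Lambda$; the only remaining issue is surjectivity.

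For surjectivity I would first settle the two-term case over an arbitrary base algebra $A$: I claim $F_A:=R\otimes_K(-)$ and its reduction $G_A:=(-)\otimes_{R\otimes_KA}A$ are mutually inverse bijections between the two-term silting objects of $A$ and of $R\otimes_KA$. Indeed $F_A$ and $G_A$ induce inverse isomorphisms on $K_0(\proj\,-)$ (matching bases of indecomposable projectives), and a basic two-term silting object is determined by its $K_0$-class by \cite{AIR}, giving injectivity of both; the one genuinely technical point is that $G_A$ lands among silting objects, which follows because the degree-one self-extension group of a two-term complex is a cokernel and $(-)\otimes_RK$ is right exact, so $\Hom(G_AV,G_AV[1])$ is a quotient of $\Hom(V,V[1])\otimes_RK=0$ and $G_AV$ generates. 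This is the heart of the argument and shows in particular that $\tau$-tilting finiteness is preserved.

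Finally I would bootstrap from two terms to all terms. Using $[W[1],W]\cong\sttilt\End_{\Kb(\proj\Gamma)}(W)$ and the fact from silting reduction \cite{AI} that every $W\in[\Gamma[n],\Gamma]$ lies in $[U[1],U]$ for some $U\in[\Gamma[n-1],\Gamma]$, I induct on $n$: the base case $[\Gamma[1],\Gamma]\cong\sttilt\Gamma$ corresponds under $F$ to $[\Lambda[1],\Lambda]\cong\sttilt\Lambda$ by the two-term bijection and is finite since $\Lambda$ is silting-discrete; in the step each such $U$ equals $FT_U$ with $\End_\Gamma(U)\cong R\otimes_K\End_\Lambda(T_U)$, so $[U[1],U]=F([T_U[1],T_U])$ is finite and inside the image, and there are only finitely many $U$. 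Hence every interval $[\Gamma[n],\Gamma]$ is finite and contained in the image, so $\Gamma$ is silting-discrete and therefore silting-connected by \cite[Corollary~3.9]{Ai}; as the image is mutation-closed and contains $\Gamma$, connectivity forces $F$ to be surjective, and thus a poset isomorphism. The hard part is the two-term surjectivity—the $K_0$-rigidity together with the right-exactness argument that $G_A$ preserves silting—while the interval covering and mutation-compatibility are routine once silting reduction and the Hom-formula are in hand.
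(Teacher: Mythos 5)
Your argument is essentially correct and rests on the same foundation as the paper's proof: the functor $F=-\otimes_KR$, the isomorphism $\Hom_{\Kb(\proj\Gamma)}(FX,FY)\cong R\otimes_K\Hom_{\Kb(\proj\Lambda)}(X,Y)$, locality of $E\otimes_KR$ for indecomposability, preservation of approximations (hence of irreducible mutations), and the closing step that a mutation-closed image containing $\Gamma$ must be everything once $\silt\Gamma$ is connected. Where you genuinely diverge is in proving that silting objects of $\Gamma$ come from $\Lambda$ and that $\Gamma$ is silting-discrete. The paper does this in one stroke: if some $U\leq\Gamma$ lay outside the image, iterated irreducible left mutation (\cite[Proposition 2.36]{AI}) would yield an infinite strictly descending chain inside a bounded interval, all of whose terms come from $\silt\Lambda$, contradicting silting-discreteness of $\Lambda$; then \cite[Theorem 2.4]{AM} finishes. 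You instead build an explicit two-term bijection $2\text{-}\silt A\cong 2\text{-}\silt(R\otimes_KA)$ for an arbitrary base $A$ via $g$-vector rigidity \cite[Theorem 5.5]{AIR} and right-exactness of the reduction $G_A$, and then bootstrap through the intervals $[\Gamma[n],\Gamma]$ by silting reduction. Your route buys more --- an explicit inverse on two-term objects and hence a self-contained transfer of $\tau$-tilting finiteness --- at the cost of length and of one step that is asserted rather than verified: the identity $[U[1],U]=F\bigl([T_U[1],T_U]\bigr)$ requires checking that the bijections $[W[1],W]\cong\sttilt\End(W)$ on the two sides commute with $F$; this naturality check is believable but not free, and it can be bypassed by running the paper's mutation-path argument inside the finite interval $[U[1],U]$, since the image is closed under irreducible mutation. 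Two small repairs: the interval-decomposition lemma you use is from \cite{AM} (or \cite{Ai2}), not \cite{AI}, and the cleanest reason that $G_AV$ generates is simply $A=G_A\Gamma\in\thick(G_AV)$ rather than the counting of summands.
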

\begin{proof}
Let us consider the triangle functor $-\otimes_KR : \Kb(\proj\Lambda)\to\Kb(\proj\Gamma)$.
Then we have an isomorphism $\Hom_{\Kb(\proj\Gamma)}(X\otimes_KR, Y\otimes_KR)\simeq\Hom_{\Kb(\proj\Lambda)}(X,Y)\otimes_KR$.
This leads to the fact that $-\otimes_KR$ keeps the indecomposability of objects; in fact, if $E$ is a local algebra, then so is $E\otimes_KA$ since $K$ is algebraically closed.
We also observe that $-\otimes_KR$ induces an injection $\silt\Lambda\to\silt\Gamma$ preserving the partial order.

We show that $-\otimes_KR$ preserves approximations.
Let $\Y$ be a full subcategory of $\Kb(\proj\Lambda)$ and $f:X\to Y$ be a left $\Y$-approximation of $X$ in $\Kb(\proj\Lambda)$.
For an object $Z$ of $\Y$, the above isomorphism makes a commutative diagram:
\[\xymatrix@C=2cm{
\Hom_{\Kb(\proj\Gamma)}(Y\otimes_KR, Z\otimes_KR) \ar[r]^{-\circ (f\otimes R)} \ar[d]_\simeq & \Hom_{\Kb(\proj\Gamma)}(X\otimes_KR, Z\otimes_KR) \ar[d]^\simeq \\
\Hom_{\Kb(\proj\Lambda)}(Y, Z)\otimes_KR \ar[r]_{(-\circ f)\otimes R} & \Hom_{\Kb(\proj\Lambda)}(X, Z)\otimes_KR
}\]
Since $-\circ f$ is surjective and $-\otimes R$ is (right) exact, we see that the two horizontal arrows are surjections,
whence $f\otimes R$ is a left $\Y\otimes_KR$ approximation of $X\otimes_KR$.

Thus, it turns out that any arrow in $\silt\Lambda$ is also an arrow in $\silt\Gamma$ under the injection $-\otimes_KR:\silt\Lambda\to\silt\Gamma$.
Conversely, we obtain that all paths from/to $\Gamma$ in $\silt\Gamma$ come from those from/to $\Lambda$ in $\silt\Lambda$, because $\Kb(\proj\Lambda)$ and $\Kb(\proj\Gamma)$ have the same rank of the Grothendieck group.

Assume that there is a silting object $U$ of $\Kb(\proj\Gamma)$ with $\Gamma\geq U$ which is out of the image of the functor $-\otimes_KR$.
By \cite[Proposition 2.36]{AI}, we have a path $\Gamma=:U_0\to U_1\to\cdots\to U_\ell \to\cdots$ in $\silt\Gamma$ with $U_i\geq U$ for any $i$, which admits an infinite length, contrary to the assumption of $\Lambda$ being silting-discrete.
Therefore, all silting objects of $\Kb(\proj\Gamma)$ smaller than $\Gamma$ come from those of $\Lambda$.
Then, we derive from \cite[Theorem 2.4]{AM} that $\Gamma$ is silting-discrete.

Finally, we see that the following are equivalent for any $T, U\in\silt\Lambda$:
\begin{enumerate}[(i)]
\item $T\geq U$;
\item there exists a path of finite length from $T$ to $U$;
\item there is a path of finite length from $T\otimes_KR$ to $U\otimes_KR$;
\item $T\otimes_KR\geq U\otimes_KR$.
\end{enumerate}
This implies that the map $-\otimes_KR$ is a poset isomorphism.
%This completes the proof. 
\end{proof}

The \emph{trivial extension} of an algebra $\Lambda$ by a $(\Lambda, \Lambda)$-bimodule $M$ is defined to be $\Lambda\oplus M$ as a $(\Lambda,\Lambda)$-bimodule in which the composition of elements $(a,m)$ and $(b,n)$ is given by $(a,m)\cdot(b,n):=(ab, an+mb)$.
Since the trivial extension of $\Lambda$ by itself is isomorphic to $K[x]/(x^2)\otimes_K\Lambda$, we immediately obtain the following corollary from Theorem \ref{sd}.

\begin{corollary}\label{te}
The trivial extension of $\Lambda$ by itself is silting-discrete if $\Lambda$ is so.
\end{corollary}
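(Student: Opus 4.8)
The plan is to apply Theorem \ref{sd} directly with $R:=K[x]/(x^2)$. First I would note that $K[x]/(x^2)$ is a finite dimensional local $K$-algebra: it is two-dimensional over $K$ with unique maximal ideal generated by the residue class of $x$, and every element lying outside this ideal has nonzero constant term and is therefore invertible. Hence it qualifies as the local algebra $R$ appearing in the hypothesis of Theorem \ref{sd}.

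The one point requiring verification is the claimed algebra isomorphism between the trivial extension of $\Lambda$ by itself and $R\otimes_K\Lambda$. Writing the trivial extension as $\Lambda\oplus\Lambda$ with multiplication $(a,m)\cdot(b,n)=(ab,an+mb)$, I would define a $K$-linear map $\varphi:\Lambda\oplus\Lambda\to K[x]/(x^2)\otimes_K\Lambda$ by $\varphi(a,m):=1\otimes a+x\otimes m$, which is plainly bijective since $\{1,x\}$ is a $K$-basis of $K[x]/(x^2)$. To check that $\varphi$ is multiplicative, I would expand $(1\otimes a+x\otimes m)(1\otimes b+x\otimes n)=1\otimes ab+x\otimes(an+mb)+x^2\otimes mn$ and use $x^2=0$ to discard the final summand; the result is exactly $\varphi\bigl((a,m)\cdot(b,n)\bigr)$. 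Thus $\varphi$ is an isomorphism of $K$-algebras, which justifies the sentence preceding the corollary.

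With these two observations in place, the corollary follows immediately: since $\Lambda$ is silting-discrete and $R=K[x]/(x^2)$ is local, Theorem \ref{sd} provides a poset isomorphism $\silt\Lambda\to\silt\Gamma$ with $\Gamma=R\otimes_K\Lambda$ and, in particular, asserts that $\Gamma$ is silting-discrete. Transporting this conclusion along the isomorphism $\varphi$ shows that the trivial extension of $\Lambda$ by itself is silting-discrete as well.

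I expect no genuine obstacle here. All of the substantive work is already carried by Theorem \ref{sd}, and the only additional ingredient is the elementary multiplication check for $\varphi$, which is entirely routine once one observes that the relation $x^2=0$ in $K[x]/(x^2)$ reproduces precisely the vanishing of the ``square'' of the bimodule part in the trivial extension.
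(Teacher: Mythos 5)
Your proposal is correct and follows exactly the paper's route: the paper justifies the corollary by the single observation that the trivial extension of $\Lambda$ by itself is isomorphic to $K[x]/(x^2)\otimes_K\Lambda$ and then invokes Theorem \ref{sd}. You merely spell out the (routine) verification of that isomorphism and of the locality of $K[x]/(x^2)$, which the paper leaves implicit.
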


\begin{remark}
The trivial extension of $\Lambda$ by its $K$-dual is often called the \emph{trivial extension} of $\Lambda$.
Applying it frequently destroys the silting-discreteness of algebras.
For instance, the algebra given by the quiver $\xymatrix{\bullet \ar@<2pt>[r] & \bullet \ar@<2pt>[l]}$ with radical square zero is silting-discrete, but its trivial extension is neither silting-discrete nor even silting-connected \cite{AGI}.
\end{remark}

We give a slight generalization of \cite[Theorem 15]{EJR}.

\begin{corollary}
Let $p$ be the characteristic of $K$.
Then every $p$-group is contained in the defect group of a nonlocal silting-discrete block of a group algebra.
%Then we can construct a silting-discrete block of a group algebra whose defect group contains arbitrary $p$-group.
\end{corollary}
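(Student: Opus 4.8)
The plan is to realize the desired block as a tensor product $KP\otimes_K B$, where $P$ is the given $p$-group and $B$ is a suitably chosen nonlocal silting-discrete block, and then to invoke Theorem \ref{sd}. First I would observe that, since $P$ is a $p$-group and $p=\operatorname{char}K$, the group algebra $KP$ is a connected local $K$-algebra; it is the unique block of itself and has defect group $P$. Thus $KP$ is exactly the kind of local algebra allowed as the tensor factor $R$ in Theorem \ref{sd}.

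Next I would carry out the standard block-theoretic bookkeeping for tensor products. If $B$ is a block of a group algebra $KH$ with defect group $D$, then, using $K(P\times H)\cong KP\otimes_K KH$ together with the fact that over the algebraically closed field $K$ the blocks of a tensor product of group algebras are precisely the tensor products of blocks of the factors, $KP\otimes_K B$ is a block of $K(P\times H)$ whose defect group is $P\times D$. In particular this defect group contains $P$, which is what the statement requires. Moreover, since $B$ has at least two nonisomorphic simple modules, so does $KP\otimes_K B$, and hence the block $KP\otimes_K B$ is nonlocal. Finally, applying Theorem \ref{sd} with $R=KP$ and $\Lambda$ the basic algebra of $B$ — silting-discreteness being a Morita, indeed derived, invariant — shows that $KP\otimes_K B$ is silting-discrete. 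Therefore $P$ sits inside the defect group $P\times D$ of the nonlocal silting-discrete block $KP\otimes_K B$ of $K(P\times H)$, and the corollary follows once $B$ is produced.

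It remains to exhibit, for each prime $p=\operatorname{char}K$, a nonlocal silting-discrete block $B$, and this is the only genuinely delicate point. For odd $p$ I would take $H=D_{2p}=C_p\rtimes C_2$: its principal block has cyclic defect group $C_p$ and inertial index $2$, hence is a Brauer tree algebra with two simple modules, that is, a representation-finite symmetric — and so silting-discrete — nonlocal algebra. The case $p=2$ is the real obstacle, because every block whose defect group is cyclic (in particular every representation-finite block) in characteristic $2$ is nilpotent and therefore local; so no representation-finite nonlocal $2$-block exists and one is forced into tame representation type. Here I would take $H=A_4$, whose principal $2$-block has Klein four defect group $V_4$, possesses three simple modules, and is an algebra of dihedral type; such algebras are silting-discrete by \cite{EJR}, and this block is nonlocal. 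This supplies the required base block for every $p$ and completes the argument. The corollary sharpens \cite[Theorem 15]{EJR} precisely in that the extra tensor factor $KP$ enlarges the defect group $P\times D$ by an arbitrary prescribed $p$-group $P$.
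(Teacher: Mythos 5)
Your proof is correct and follows essentially the same route as the paper: realize the block as $KP\otimes_K B$ for a nonlocal silting-discrete block $B$, compute its defect group as $P\times D$, and apply Theorem \ref{sd}. You are in fact more thorough than the paper's own proof, which only remarks ``for example, if $D$ is cyclic, dihedral, semidihedral or quaternion'' without verifying nonlocality of the resulting block or exhibiting a nonlocal silting-discrete base block in every characteristic; your explicit choices (the principal block of $D_{2p}$ for odd $p$, and of $A_4$ for $p=2$, where the cyclic-defect option genuinely fails to be nonlocal) close that gap.
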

\begin{proof}
Let $P$ be a $p$-group and $G$ a finite group.
Let $\Lambda$ be a block of the group algebra $KG$ with defect group $D$.
As is well-known, $KP$ is a local algebra and $KP\otimes_K\Lambda$ is a block of the group algebra $K[P\times G]$ whose defect group is $P\times D$.
Thus, we apply Theorem \ref{sd} to get the desired block; for example, if $D$ is cyclic, dihedral, semidihedral or quaternion, then $\Lambda$ is silting-discrete, whence so is $KP\otimes_K\Lambda$.
\end{proof}

\section{Second triangular matrix algebras}

The first aim of this section is to develop the Auslander--Reiten's results in \cite{AR} to the $\tau$-tilting finiteness.
We start with recalling important facts on $\tau$-tilting finite algebras.

Let $\Lambda$ be a ring-indecomposable basic algebra.
%We call a module $M$ \emph{$\tau$-rigid} provided $\Hom_\Lambda(M, \tau M)=0$.
%It is said to be \emph{$\tau$-tilting} if in addition it has the same number of nonisomorhpic indecomposable summands as $\Lambda$.
%A \emph{support $\tau$-tilting} module is defined to be $\tau$-rigid and there exists an idempotent $e$ such that it is $\tau$-tilting over $\Lambda/(e)$.
We call a module $M$ over $\Lambda$ a \emph{support $\tau$-tilting} module provided it is the 0th cohomology of a silting object $T$ in $\Kb(\proj\Lambda)$ with $T^i=0$ unless $i=0,-1$ (see \cite{AIR} for more details).
Our interest in this paper is when an algebra $\Lambda$ has only finitely many support $\tau$-tilting modules; so-called, $\Lambda$ is \emph{$\tau$-tilting finite}.
Evidently, if $\Lambda$ is silting-discrete, then it is $\tau$-tilting finite.
We also know that any factor algebra of a $\tau$-tilting finite algebra is also $\tau$-tilting finite \cite[Theorem 5.12(d)]{DIRRT}.
A module $M$ is said to be \emph{brick} if $\End_\Lambda(M)$ is isomorphic to $K$.
It was shown that $\Lambda$ is $\tau$-tilting finite iff there are only finitely many bricks of $\Lambda$ \cite[Theorem 4.2]{DIJ}.

A main algebra we study here is the $n\times n$ upper triangular matrix algebra $T_n(\Lambda)$,
%$\begin{pmatrix}
%\Lambda & \Lambda & \cdots & \Lambda \\ 0 & \Lambda %\end{pmatrix}$,
which is isomorphic to $\Lambda\otimes_KK\overrightarrow{A_n}$.
Here, $\overrightarrow{A_n}$ denotes the linearly oriented $A_n$-quiver $\xymatrix{1\ar[r] & 2\ar[r] & \cdots \ar[r] & n}$. 
As is well-known, we can identify the category $\mod T_2(\Lambda)$ with the category of homomorphisms in $\mod\Lambda$; that is, the objects are triples $(M,N,f)$ of $\Lambda$-modules $M,N$ and a $\Lambda$-homomorphism $f:M\to N$.
A morphism $(M_1, N_1,f_1)\to(M_2,N_2,f_2)$ is a pair $(\alpha,\beta)$ of $\Lambda$-homomorphisms $\alpha:M_1\to M_2$ and $\beta:N_1\to N_2$ satisfying $f_2\circ\alpha=\beta\circ f_1$.

For an additive category $\C$, we denote by $\mod\C$ the full subcategory of the functor category of $\C$ consisting of finitely generated functors.

Inspired by \cite[Theorem 1.1]{AR}, we have the second main result of this paper.

\begin{theorem}\label{2taustfrf}
Assume that $\Lambda$ is representation-finite.
Then the following hold:
\begin{enumerate}
\item If the Auslander algebra of $\Lambda$ is $\tau$-tilting finite, then so is $T_2(\Lambda)$.
\item If $\Lambda$ is simply-connected, then $T_2(\Lambda)$ is $\tau$-tilting finite if and only if it is representation-finite.
In particular, the converse of (1) holds.
\end{enumerate}
\end{theorem}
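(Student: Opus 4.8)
The plan is to connect the representation theory of $T_2(\Lambda)$ with that of the Auslander algebra $\Gamma_\Lambda := \End_\Lambda(\bigoplus_{M \in \ind\Lambda} M)$ through the functorial description of $\mod T_2(\Lambda)$ recalled just above, and to detect $\tau$-tilting finiteness via the brick criterion of \cite[Theorem 4.2]{DIJ}. Throughout, I would fix a representation generator $G = \bigoplus_{M \in \ind\Lambda} M$, so that $\Gamma_\Lambda = \End_\Lambda(G)$ and, by Auslander's classical theorem, $\mod \Gamma_\Lambda \simeq \mod(\proj\Gamma_\Lambda^{\op})$ is equivalent to the functor category $\mod(\mod\Lambda)$. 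Since $\Lambda$ is representation-finite, this functor category is a genuine finite-dimensional module category, and all the machinery of $\tau$-tilting theory applies to it.

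\medskip

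\emph{Proof of (1).} For the first implication I would argue contrapositively using bricks. Suppose $T_2(\Lambda)$ is $\tau$-tilting infinite; by \cite[Theorem 4.2]{DIJ} it has infinitely many bricks, i.e. infinitely many pairwise non-isomorphic triples $(M, N, f)$ with $\End_{T_2(\Lambda)}(M,N,f) \cong K$. The key step is to transport such a triple into a functor in $\mod(\mod\Lambda)$. To a morphism $f : M \to N$ in $\mod\Lambda$ one associates the cokernel of the induced map $\Hom_\Lambda(-, M) \to \Hom_\Lambda(-, N)$ in $\mod(\mod\Lambda)$, or equivalently a finitely presented functor; this is exactly the content of the Auslander--Reiten correspondence in \cite[Theorem 1.1]{AR} between $\mod T_2(\Lambda)$ and $\mod(\mod\Lambda)$, which underlies the present statement. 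I would verify that this assignment, restricted to brick objects, produces infinitely many bricks (or at least infinitely many indecomposables with controlled endomorphism rings) over $\Gamma_\Lambda$, and then invoke \cite[Theorem 4.2]{DIJ} once more to conclude that $\Gamma_\Lambda$ is $\tau$-tilting infinite. The main care here is that the functor $(M,N,f) \mapsto \Coker$ need not send every brick to a brick, so I expect to need the full faithfulness properties of the Auslander correspondence to guarantee that no infinite family collapses to finitely many isomorphism classes.

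\medskip

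\emph{Proof of (2).} For the equivalence under the simply-connected hypothesis, the nontrivial direction is that $T_2(\Lambda)$ being $\tau$-tilting finite forces it to be representation-finite; the reverse implication is immediate since representation-finite algebras are always $\tau$-tilting finite. The strategy is to show that if $T_2(\Lambda)$ is representation-infinite, then it is $\tau$-tilting infinite. Here I would exploit that $\Lambda$ simply-connected and representation-finite gives strong structural control over the representation type of $T_2(\Lambda) \cong \Lambda \otimes_K K\overrightarrow{A_2}$: one can analyze the covering theory or the separated quiver so that representation-infiniteness of $T_2(\Lambda)$ produces a tame or wild subquotient, from which infinitely many bricks are extracted (the separated-quiver obstruction is precisely the mechanism formalized in Theorem~\ref{necessary}). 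The final clause, that the converse of (1) holds under simple-connectedness, then follows by combining part (1) with this equivalence: representation-finiteness of $T_2(\Lambda)$ would, via the simply-connected structure, be inherited by $\Gamma_\Lambda$, forcing $\Gamma_\Lambda$ to be $\tau$-tilting finite as well.

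\medskip

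\textbf{Main obstacle.} The hardest step is the faithful transfer of brick-finiteness across the equivalence $\mod T_2(\Lambda) \simeq \mod(\mod\Lambda)$ in part (1): establishing that an infinite family of bricks over $T_2(\Lambda)$ genuinely yields $\tau$-tilting infiniteness of the Auslander algebra, rather than merely representation-infiniteness, requires that the correspondence interact well with the brick characterization of \cite{DIJ} and not merely with indecomposability. A secondary difficulty in part (2) is making precise how the simply-connected hypothesis rules out the degenerate configurations in which $T_2(\Lambda)$ could be representation-infinite yet $\tau$-tilting finite (the phenomenon realized in Example~\ref{ce} without that hypothesis), so that the separated-quiver criterion can be applied cleanly.
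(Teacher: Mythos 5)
For part (1) your strategy coincides with the paper's: pass through the Auslander--Reiten functor $\Phi\colon\mod T_2(\Lambda)\to\mod(\mod\Lambda)$, $(M,N,f)\mapsto\Coker\Hom_\Lambda(-,f)$, and count bricks via \cite[Theorem 4.2]{DIJ}. The difficulty you flag at the end is real, but note that the fix is \emph{not} faithfulness (the functor is genuinely not faithful, as the paper stresses): one restricts $\Phi$ to the subcategory $\D$ of modules with no indecomposable summand of the form $(M,M,\identity)$ or $(M,0,0)$. On $\D$ the functor is full, so for a brick $X$ the ring $\End(\Phi(X))$ is a quotient of $\End(X)\cong K$ and hence $\Phi(X)$ is again a brick; and the property that $\Phi(\sigma)$ invertible forces $\sigma$ invertible on $\D$ gives injectivity on isomorphism classes. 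Since the excluded indecomposables form a finite set ($\Lambda$ is representation-finite), finiteness of bricks transfers. So your part (1) is the paper's argument with the key lemma left unproved but correctly located.

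Part (2) contains a genuine gap: the separated-quiver mechanism of Theorem \ref{necessary} cannot prove that $\tau$-tilting finiteness of $T_2(\Lambda)$ forces representation-finiteness. The separated quiver only sees $\rad/\rad^2$, and it fails already for $\Lambda=K\overrightarrow{A_5}$: every connected component of the separated quiver of $T_2(K\overrightarrow{A_5})$ is of type $A$, yet $T_2(K\overrightarrow{A_5})$ is representation-infinite and $\tau$-tilting infinite (Example \ref{CL}). Theorem \ref{necessary} is a sufficient, not a necessary, condition for $\tau$-tilting infiniteness, so it cannot close the implication you need. The paper instead argues as follows: if $T_2(\Lambda)$ is $\tau$-tilting finite, it has no finite convex subcategory which is tame concealed of extended Dynkin type (such subcategories are $\tau$-tilting infinite and the property descends to them); by \cite[Theorem 4]{LS1}, for simply-connected $\Lambda$ the absence of such convex subcategories forces $T_2(\Lambda)$ to be representation-finite; then \cite[Theorem 1.1]{AR} transfers representation-finiteness to the Auslander algebra, which is therefore $\tau$-tilting finite. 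Without invoking the Leszczy\'nski--Skowro\'nski criterion (or some equally strong classification of the representation type of $T_2(\Lambda)$ for simply-connected $\Lambda$), your outline for (2) does not go through.
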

\begin{proof}
Let us first recall an argument in \cite[Theorem 1.1]{AR}.
It was shown that the functor $\Phi: \mod T_2(\Lambda) \to \mod(\mod\Lambda)$ sending $(M, N, f)$ to $\Coker\Hom_\Lambda(-,f)$ is full and dense.
Denote by $\D$ the full subcategory of $\mod T_2(\Lambda)$ consisting of modules without indecomposable summands of the forms $(M, M, \identity)$ and $(M, 0,0)$, where $M$ is an indecomposable module over $\Lambda$.
Then the restriction of $\Phi$ is full and dense (not faithful!), and a morphism $\sigma$ in $\D$ with $\Phi(\sigma)$ isomorphic is an isomorphism.

We show the assertion (1) holds true.
As above, any brick over $T_2(\Lambda)$ lying in $\D$ is sent to some brick in $\mod(\mod\Lambda)$ by the functor $\Phi$ and the correspondence is objectively injective.
Therefore, $T_2(\Lambda)$ inherits the finiteness of bricks from the Auslander algebra of $\Lambda$,
whence the assertion follows from \cite[Theorem 4.2]{DIJ}. 

To prove the assertion (2), we assume that $\Lambda$ is simply-connected and $T_2(\Lambda)$ is $\tau$-tilting finite.
Then, $T_2(\Lambda)$ does not contain a finite convex subcategory which is concealed of extended Dynkin type.
The simple-connectedness of $\Lambda$ (i.e. $\widetilde{\Lambda}=\Lambda$ in the sense of \cite{LS1}) implies that $T_2(\Lambda)$ is representation-finite by \cite[Theorem 4]{LS1}.
Moreover, we deduce from \cite[Theorem 1.1]{AR} that the Auslander algebra of $\Lambda$ is also representation-finite, and so it is $\tau$-tilting finite.
\end{proof}

Let $\Lambda$ be an algebra whose Gabriel quiver is $Q$.
The \emph{separated quiver} $Q^\spq$ of $\Lambda$ is defined as follows:
The set of vertices consists of the vertices $i_1,\cdots, i_n$ of $Q$ and their copies $i'_1,\cdots, i'_n$;
we say that $i$ and $i'$ are the same character.
We draw an arrow $i\to k$ if $i$ is a vertex of $Q$, $k=j'$ for some vertex $j$ of $Q$ and if there is an arrow $i\to j$ in $Q$ (see \cite{ARS}).
Observing the separated quiver, we can infer the representation-finiteness and $\tau$-tilting finiteness of a radical-square-zero algebra \cite{ARS, Ad1}; we use their results freely.

We know from \cite[Proposition 3.1]{AR} that if the separated quiver of an algebra $\Lambda$ has a connected component which is not of type $A_n$, then $T_2(\Lambda)$ is representation-infinite.
%However, we can not replace the conclusion with the condition of $T_2(\Lambda)$ being $\tau$-tilting finite.
%Indeed, consider the algebra $\Lambda$ presented by the quiver
%\[\xymatrix{&1\ar@<-2pt>[dl]\ar@<-2pt>[dr]&\\2 \ar@<-2pt>[rr]\ar@<-2pt>[ur]&&3 \ar@<-2pt>[ul]\ar@<-2pt>[ll]}\]
%Then the separated quiver is of type $\widetilde{A_5}$, but $T_2(\Lambda)$ is $\tau$-tilting finite.
Here is a modification to $\tau$-tilting finiteness.

\begin{theorem}\label{necessary}
Let $\Lambda$ be an algebra given by a quiver without loop.
If the separated quiver of $\Lambda$ has a connected component which is not of type $A_n$, then $T_2(\Lambda)$ is $\tau$-tilting infinite.
\end{theorem}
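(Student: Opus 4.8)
The plan is to produce a radical-square-zero factor algebra of $T_2(\Lambda)$ that is $\tau$-tilting infinite and then invoke the fact that $\tau$-tilting infiniteness is inherited by overalgebras of factors. Concretely, set $B:=T_2(\Lambda)/\rad^2 T_2(\Lambda)$. Since $B$ is a factor algebra of $T_2(\Lambda)$, by \cite[Theorem 5.12(d)]{DIRRT} it suffices to prove that $B$ is $\tau$-tilting infinite. As $B$ is radical-square-zero, I would apply the classification of radical-square-zero $\tau$-tilting finite algebras in terms of the separated quiver: such an algebra, whose Gabriel quiver has no loop, is $\tau$-tilting finite if and only if its separated quiver is a disjoint union of quivers of type $A_n$ \cite{Ad1}. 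Now $B$ and $T_2(\Lambda)$ have the same Gabriel quiver, hence the same separated quiver, so the whole problem reduces to a combinatorial comparison of the separated quiver of $T_2(\Lambda)$ with that of $\Lambda$.

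First I would compute the Gabriel quiver $\widehat{Q}$ of $T_2(\Lambda)\cong\Lambda\otimes_K K\overrightarrow{A_2}$. Writing $Q$ for the Gabriel quiver of $\Lambda$, the standard description of the Gabriel quiver of a tensor product shows that $\widehat{Q}$ consists of two copies $Q^{(1)},Q^{(2)}$ of $Q$, together with one connecting arrow from the vertex in $Q^{(1)}$ to the corresponding vertex in $Q^{(2)}$ for each vertex of $Q$. Since $Q$ has no loop, neither does $\widehat{Q}$, so \cite{Ad1} is genuinely applicable to $B$. Passing to separated quivers, $\widehat{Q}^\spq$ then contains a copy of the separated quiver $Q^\spq$ of $\Lambda$ as a connected subquiver, namely the part arising from the copy $Q^{(2)}$, together with extra vertices and arrows attached through the connecting arrows.

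The heart of the argument is the following combinatorial observation: if $Q^\spq$ has a connected component $C_0$ that is not of type $A_n$, then the connected component of $\widehat{Q}^\spq$ containing the copy of $C_0$ is again not of type $A_n$. Indeed, a finite connected quiver is of type $A_n$ exactly when its underlying graph is a path, and every connected subgraph of a path is again a path; since the copy of $C_0$ is connected and is not a path, the larger component of $\widehat{Q}^\spq$ in which it sits cannot be a path either, because attaching further vertices and arrows can only raise vertex degrees and create cycles, never destroy the branch vertex or cycle witnessing that $C_0$ is not a path. Therefore the separated quiver of $B$ has a component which is not of type $A_n$, so $B$ is $\tau$-tilting infinite by \cite{Ad1}, and hence so is $T_2(\Lambda)$ by \cite[Theorem 5.12(d)]{DIRRT}.

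The main obstacle I anticipate is the precise determination of $\widehat{Q}^\spq$ and the verification that the non-type-$A_n$ feature of $Q^\spq$ genuinely survives in $\widehat{Q}^\spq$ rather than being absorbed by the connecting arrows; this is exactly where one must check that the copy $Q^{(2)}$ contributes its separated arrows intact and that the added pendant structure cannot flatten a branch point into a path, which is what the clean ``connected subgraph of a path is a path'' argument guarantees. A secondary point worth emphasising is that the criterion must be used in its ``type $A_n$'' form and not merely ``Dynkin'': a radical-square-zero algebra whose separated quiver has a component of type $D$ or $E$ is already $\tau$-tilting infinite, and this is precisely what makes the hypothesis ``not of type $A_n$'' (rather than ``not Dynkin'') the correct one, in contrast with the representation-infiniteness statement \cite[Proposition 3.1]{AR}.
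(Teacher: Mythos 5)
Your reduction to $B=T_2(\Lambda)/\rad^2T_2(\Lambda)$ and the appeal to \cite[Theorem 5.12(d)]{DIRRT} are fine, but the criterion you then invoke is not Adachi's theorem, and it is false: a loop-free radical-square-zero algebra whose separated quiver has a component not of type $A_n$ need \emph{not} be $\tau$-tilting infinite. Take $\Lambda$ to be the path algebra of the quiver with arrows $1\to 2$, $1\to 3$, $1\to 4$. It is hereditary of Dynkin type $D_4$, hence representation-finite and a fortiori $\tau$-tilting finite, it has radical square zero and no loops, and yet its separated quiver has a connected component (on the vertices $1,2',3',4'$) of type $D_4$, which is not of type $A_n$. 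What is true is the dichotomy underlying the paper's argument: if every component of the separated quiver is Dynkin the algebra is representation-finite, and if some component is of \emph{extended} Dynkin type \emph{with all vertices of distinct characters} one obtains a tame hereditary subfactor and hence $\tau$-tilting infiniteness; between these, Adachi's actual characterization is more delicate and is not the statement you use. A quick sanity check exposes the problem: your argument never uses any feature of $T_2$ beyond the fact that its separated quiver contains a copy of that of $\Lambda$, so if it were correct it would prove, with $B=\Lambda/\rad^2\Lambda$, that $\Lambda$ itself is $\tau$-tilting infinite whenever its separated quiver has a non-$A_n$ component --- contradicted by the $D_4$ example above.

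The missing idea is precisely the amplification step that makes the theorem about $T_2(\Lambda)$ rather than about $\Lambda$. The paper splits into two cases. If the offending component is of type $\widetilde{A_n}$, the Gabriel quiver of $\Lambda$ contains a subquiver of type $\widetilde{A_n}$, and one checks that the separated quiver of $T_2(\Lambda)$ then contains a subquiver of type $\widetilde{D_5}$ on vertices of pairwise distinct characters. Otherwise the component has a vertex $v$ of degree at least $3$; since $Q$ has no loops, $v$ and its three neighbours are distinct characters, and the doubling of the quiver in $T_2(\Lambda)$ (each arm $v\to w'$ lengthens to a two-step arm through the connecting arrows) produces a subquiver of type $\widetilde{E_6}$ on seven vertices of distinct characters in the separated quiver of $T_2(\Lambda)$. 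Only because these configurations are extended Dynkin \emph{and} supported on distinct characters can one truncate by an idempotent and pass to a factor to reach a representation-infinite hereditary algebra, whence $\tau$-tilting infiniteness. Your "connected subgraph of a path is a path" observation shows only that the relevant component of the separated quiver of $T_2(\Lambda)$ is not of type $A_n$, which, as the $D_4$ example shows, is not enough to conclude anything.
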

\if0
\begin{proof}
Assume that $T_2(\Lambda)$ is $\tau$-tilting finite,
then so are $\Lambda$ and $\Lambda/\rad^2\Lambda$.
%Since $\Lambda/\rad^2\Lambda$ is also $\tau$-tilting finite,
%it follows from \cite{Ad1} that a connected component $\C$ of the separated quiver of $\Lambda$ is Dynkin type by removing same characters, that is, either $i$ or $i'$.
%
To derive a contradiction, suppose that there is a connected component $\C$ of the separated quiver of $\Lambda$ which has a vertex $v$ of degree at least 3.
%Suppose that $\C$ has a source starting to 3 points or a sink ending from 3 points.
We divide the proof to two cases:
(i) there is no loop at $v$ in the (Gabriel) quiver $Q$ of $\Lambda$; (ii) otherwise.

(i) In the case, it is seen that the 4 points around $v$ in $\C$ are different characters.
This implies that the separated quiver of $T_2(\Lambda)$ contains the diagram of type $\widetilde{E_6}$ whose vertices are distinct characters,
but this contradicts the assumption of $T_2(\Lambda)$ being $\tau$-tilting finite.
Therefore, we observe that every point of $\C$ has degree at most 2,
which says that $\C$ is of type $A_n$ or $\widetilde{A_n}$.
In the case where $\C$ is of type $\widetilde{A_n}$,
the (Gabriel) quiver $Q$ of $\Lambda$ admits a subquiver of type $\widetilde{A_n}$.
Then, it is not hard to see that the separated quiver of $T_2(\Lambda)$ has a subquiver of type $\widetilde{A_5}$ with distinct characters,
whence it is not $\tau$-tilting finite.
Consequently, we conclude the fact that $\C$ is of type $A_n$.

\old{
(ii) This case occurs when $Q$ has
$\xymatrix{
2 & 1 \ar@(lu,ru)^{}\ar[r]\ar[l] & 3 
}$
or 
$\xymatrix{
2 \ar[r] & 1 \ar@(lu,ru)^{} & 3 \ar[l]
}$
as a subquiver, where $v=1$.
Let us consider the former case; the other can be handled similarly.
Truncating by idempotents and factoring by ideals 
%\comment{Isn驕ｯ\UTF{FFFD}ｶ陷\UTF{FFFD}\UTF{FFFD}ｲ 驕ｯ\UTF{FFFD}ｶ髫ｧ\UTF{FFFD}ｱy idempotens by factoring 驕ｯ\UTF{FFFD}ｶ\UTF{FFFD}ｽ\UTF{FFFD}ｽnot 驕ｯ\UTF{FFFD}ｶ髫ｧ\UTF{FFFD}ｱy idempotens and factoring 驕ｯ\UTF{FFFD}ｶ\UTF{FFFD}ｽ\UTF{FFFD}ｽ}
, we focus on the algebra $\Gamma$ given by the above former quiver with radical square zero.
As $T_2(\Lambda)$ is $\tau$-tilting finite, it follows that $T_2(\Gamma)$ is also $\tau$-tilting finite.
Since $\Gamma$ is representation-finite, we obtain from Proposition \ref{auslander} that the Auslander algebra of $\Gamma$ is $\tau$-tilting finite.
Observing the Auslander--Reiten quiver of $\Gamma$, the quiver of the Auslander algebra of $\Gamma$ has the form
\[\xymatrix{
& \bullet \ar[dr] &\\
\bullet \ar[r]\ar[ur]\ar[dr] & \bullet \ar[r] & \bullet \\
& \bullet \ar[ur] &
}\]
as a subquiver, whence the Auslander algebra of $\Gamma$ is not $\tau$-tilting finite, which is contrary.
Thus, it turns out that this case does not happen.
}
\end{proof}
\fi
\begin{proof}
Let $\C$ be a connected component of the separated quiver of $\Lambda$.

Suppose that $\C$ is of type $\widetilde{A_n}$.
Then we observe that the (Gabriel) quiver $Q$ of $\Lambda$ admits a subquiver of type $\widetilde{A_n}$, which leads to the fact that the separated quiver of $T_2(\Lambda)$ contains a subquiver of type $\widetilde{D_5}$ with distinct characters.
Hence, it turns out that $T_2(\Lambda)/\rad T_2(\Lambda)$, and so $T_2(\Lambda)$, are not $\tau$-tilting finite.

Thus, we can assume that $\C$ is neither of type $A_n$ nor of type $\widetilde{A_n}$.
This means that $\C$ has a vertex $v$ of degree at least 3;
it does not matter if $v$ is the original or the copy of a vertex of $Q$.
We may suppose that $Q$ possesses no multiple arrow.
As $Q$ admits no loop, it is seen that the 4 points around $v$ (including also $v$) are distinct characters in $\C$.
Therefore, we obtain that the separated quiver of $T_2(\Lambda)$ contains the diagram of type $\widetilde{E_6}$ whose vertices are distinct characters,
whence $T_2(\Lambda)$ is not $\tau$-tilting finite.
\end{proof}

The converse of Theorem \ref{necessary} does not necessarily hold.

\begin{example}[See also Theorem \ref{2taustfrf}(2)]\label{CL}
Let $\Lambda:=K\overrightarrow{A_n}$.
Observe that $T_2(\Lambda)$ is the commutative ladder of degree $n$; see \cite{AHMW, EH, LS1}.
Then the following are equivalent:
(i) $n\leq4$;
(ii) $T_2(\Lambda)$ is representation-finite;
(iii) it is $\tau$-tilting finite.

Combining this observation and Theorem \ref{2taustfrf}(1), we recover \cite[Corollary 4.8]{IX}; that is, the following are equivalent:
(i) $n\leq4$;
(ii) the Auslander algebra of $\Lambda$ is representation-finite;
(iii) it is $\tau$-tilting finite.
\end{example}

We give an example which says that the converse of Theorem \ref{2taustfrf}(1) does not necessarily hold and that the assumption of $\Lambda$ having no loop as in Theorem \ref{necessary} is required.

\begin{example}\label{ce}
Let $\Lambda$ be the radical-square-zero algebra presented by the quiver:
\[\xymatrix{
2 & 1 \ar@(lu,ru)^{}\ar[r]\ar[l] & 3 
}\]
\begin{enumerate}[(i)]
\item The separated quiver of $\Lambda$ consists of three connected components; one Dynkin quiver of type $D_4$ and two isolated points.
So, $\Lambda$ is representation-finite.

\item Let us show that $T_2(\Lambda)$ is $\tau$-tilting finite.
Since $T_2(K\overrightarrow{A_3})$ is derived equivalent to the path algebra of Dynkin type $E_6$ \cite{L}, it is seen that $T_2(K\overrightarrow{A_3})$ is silting-discrete.
By Theorem \ref{sd}, we obtain that $T_2(K\overrightarrow{A_3})\otimes_KK[x]/(x^2)$ is silting-discrete; in particular, it is $\tau$-tilting finite.
As there is an algebra epimorphism $T_2(K\overrightarrow{A_3})\otimes_KK[x]/(x^2)\to T_2(\Lambda)$, we deduce that the target $T_2(\Lambda)$ is $\tau$-tilting finite.
\item However, the Auslander algebra $\Gamma$ of $\Lambda$ is not $\tau$-tilting finite.
This is deduced by observing the Auslander--Reiten quiver of $\Lambda$ (it gives a quiver presentation of $\Gamma$):
\[\xymatrix{
\bullet \ar[dr]\ar@{.}[rr] & & \bullet \ar[dr]\ar@{.}[rr] & & \bullet \ar[dr] & \\
*+[o][F-]{\bullet} \ar[r]\ar@{.}@/^1pc/[rr] & \bullet \ar[ur]\ar[r]\ar[dr]\ar@{.}@/_1pc/[rr] & \bullet \ar[r]\ar@{.}@/^1pc/[rr] & \bullet \ar[ur]\ar[r]\ar[dr]\ar@{.}@/_1pc/[rr] & \bullet \ar[r] & *+[o][F-]{\bullet} \\
\bullet \ar[ur]\ar@{.}[rr] & & \bullet \ar[ur]\ar@{.}[rr] & & \bullet \ar[ur] & \\
}\]
Here, the vertex $\xymatrix{*+[o][F-]{\bullet}}$ coincides.
Factoring by an ideal, we find the factor algebra $\Gamma_1$ of $\Gamma$ presented by the quiver
\[\xymatrix{
& \bullet \ar[rd] & \\
\bullet \ar[ru]\ar[rd]\ar[r] & \bullet \ar[r] & \bullet\\
&\bullet \ar[ur]&
}\]
with a zero relation; the sum of the three paths of length 2 is zero.
Truncating $\Gamma_1$ by idempotents, we get the Kronecker algebra, which implies that $\Gamma_1$, and so $\Gamma$, are $\tau$-tilting infinite.
\end{enumerate}
\end{example}

\section{Higher triangulated matrix algebras}
In this section, we focus on the $\tau$-tilting finiteness of ${T_2}^3(\Lambda)$ and $T_n(\Lambda)$ ($n>2$).
Let $\Lambda$ be a ring-indecomposable basic algebra.

In the paper \cite{AR}, it was also discussed that the third triangular matrix algebra ${T_2}^3(\Lambda)$ over an algebra $\Lambda$ is not representation-finite \cite[Theorem 3.4]{AR}.
To see this, we consider the triangular matrix algebra ${T_2}^3(\Lambda/\rad\Lambda)$.
It is because this is a factor algebra of ${T_2}^3(\Lambda)$, since $T_2(\Lambda)/I\simeq T_2(\Lambda/\rad\Lambda)$.
Here, $I$ stands for the ideal $\begin{pmatrix} \rad\Lambda & \rad\Lambda \\ 0 & \rad\Lambda \end{pmatrix}$.
As ${T_2}^3(\Lambda/\rad\Lambda)$ is the direct product of some copies of ${T_2}^3(K)$, the next step is to observe ${T_2}^3(K)$.
We see that ${T_2}^3(K)$ is presented by the quiver
\[\xymatrix{
& 1 \ar[rr]\ar[dl] \ar@{.>}[dd]& & 2 \ar[dl]\ar[dd] \\
3 \ar[rr]\ar[dd] & & 4 \ar[dd] & \\
& 5 \ar@{.>}[rr]\ar@{.>}[dl] & & 6 \ar[dl] \\
7 \ar[rr] & & 8 
}\]
%with all possible commutative relations,
whose separated quiver contains the connected component:
\[\xymatrix{
&&2 \ar[dll]\ar[drr]&&\\
4' & 3 \ar[l]\ar[r] & 7' & 5 \ar[l]\ar[r] & 6' 
}\]
This implies that ${T_2}^3(K)/\rad^2 {T_2}^3(K)$ is representation-infinite.
Consequently, it turns out that ${T_2}^3(\Lambda)$ is of infinite representation type.

We can apply this argument to obtain the following result.

\begin{proposition}\label{third}
\begin{enumerate}
\item The triangular matrix algebra ${T_2}^3(\Lambda)$ is $\tau$-tilting infinite.
\item For nonlocal algebras $\Lambda, \Gamma$ and $\Sigma$,
$\Lambda\otimes_K\Gamma\otimes_K\Sigma$ is $\tau$-tilting infinite.
\end{enumerate}
\end{proposition}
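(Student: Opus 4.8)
The plan is to deduce both statements from the single computation recorded just before the proposition — that the separated quiver of ${T_2}^3(K)$ has a connected component equal to the $6$-cycle $\widetilde{A_5}$ — together with the fact that $\tau$-tilting finiteness is inherited by factor algebras \cite[Theorem 5.12(d)]{DIRRT}. For (1) I would reduce to ${T_2}^3(K)$ and run the $\tau$-tilting analogue of the representation-infiniteness argument above; for (2) I would exhibit ${T_2}^3(K)$ itself as a factor algebra of the triple tensor product and quote (1).

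For part (1), the projection $\Lambda\to\Lambda/\rad\Lambda\to K$ onto one simple factor induces an algebra surjection ${T_2}^3(\Lambda)\to{T_2}^3(K)$, so it is enough to prove that ${T_2}^3(K)$ is $\tau$-tilting infinite. Since the $6$-cycle $\widetilde{A_5}$ is an extended Dynkin diagram rather than a Dynkin one, it is not a disjoint union of the Dynkin quivers appearing in the radical-square-zero classification, so the criterion of \cite{Ad1} shows that ${T_2}^3(K)/\rad^2{T_2}^3(K)$ is $\tau$-tilting infinite; this is exactly the promised upgrade of the representation-infiniteness deduced from the same $6$-cycle. As this radical-square-zero algebra is a factor of ${T_2}^3(K)$, the latter, and hence ${T_2}^3(\Lambda)$, is $\tau$-tilting infinite by \cite[Theorem 5.12(d)]{DIRRT}.

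For part (2) I would realise ${T_2}^3(K)=K\overrightarrow{A_2}\otimes_KK\overrightarrow{A_2}\otimes_KK\overrightarrow{A_2}$ as a factor algebra of $\Lambda\otimes_K\Gamma\otimes_K\Sigma$. Each of $\Lambda,\Gamma,\Sigma$ is ring-indecomposable and nonlocal, so its Gabriel quiver is connected with at least two vertices and therefore carries an arrow $i\to j$ between distinct vertices. Taking $e=e_i+e_j$, the factor $A/A(1-e)A$ has Gabriel quiver the full subquiver on $\{i,j\}$, and a further factor kills any remaining arrows and relations to leave the path algebra $K\overrightarrow{A_2}$; thus every such $A$ admits an algebra surjection $A\to K\overrightarrow{A_2}$. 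Tensoring the three surjections over $K$ (which is exact, $K$ being a field) gives a surjection $\Lambda\otimes_K\Gamma\otimes_K\Sigma\to{T_2}^3(K)$, whose target is $\tau$-tilting infinite by the case $\Lambda=K$ of part (1); hence, once more by \cite[Theorem 5.12(d)]{DIRRT}, the source $\Lambda\otimes_K\Gamma\otimes_K\Sigma$ is $\tau$-tilting infinite.

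The main obstacle is conceptual: I must be sure that the representation-infiniteness argument preceding the statement transfers to $\tau$-tilting infiniteness, the sole new input being that the $6$-cycle $\widetilde{A_5}$, being extended Dynkin, falls outside the radical-square-zero $\tau$-tilting-finite list of \cite{Ad1}. A secondary point requiring care in (2) is the reduction of an arbitrary nonlocal algebra to $K\overrightarrow{A_2}$: ring-indecomposability is genuinely needed here, for the disconnected nonlocal algebra $K\times K$ gives $(K\times K)^{\otimes3}\cong K^{8}$, which is semisimple and hence $\tau$-tilting finite; connectedness is precisely what supplies the arrow between distinct vertices, and with it the surjection onto $K\overrightarrow{A_2}$.
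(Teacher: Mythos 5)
Your proof is correct and follows essentially the same route as the paper: reduce to ${T_2}^3(K)$ via the surjection induced by $\Lambda\to\Lambda/\rad\Lambda$, apply Adachi's radical-square-zero criterion to the $\widetilde{A_5}$-component of the separated quiver (whose six vertices are indeed pairwise distinct characters, which is what that criterion actually requires, rather than mere failure to be a disjoint union of Dynkin diagrams), and for (2) tensor three surjections onto $K\overrightarrow{A_2}$ to land on ${T_2}^3(K)$. Your observation that ring-indecomposability of $\Lambda$, $\Gamma$ and $\Sigma$ is genuinely needed in (2) --- since $(K\times K)^{\otimes 3}$ is semisimple --- is a point the paper leaves to its standing conventions.
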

\begin{proof}
(1) Combine the argument above and Adachi's theorem \cite[Theorem 3.1]{Ad1}.

(2) By assumption, there is an algebra epimorphism from $\Lambda\otimes_K\Gamma\otimes_K\Sigma$
to $K\overrightarrow{A_2}\otimes_KK\overrightarrow{A_2}\otimes_KK\overrightarrow{A_2}\simeq {T_2}^3(K)$.
Then apply (1).
\end{proof}

We expect that there is an upper bound of $n$ such that the $n\times n$ triangular matrix algebra over a nonsemisimple algebra is $\tau$-tilting finite; cf. \cite[Theorem 6.1]{LS}.
%, but one has the following observation; .

\begin{proposition}\label{polynomial}
\begin{enumerate}
\item Let $n>0$. If $\Lambda$ is local, then $T_n(\Lambda)$ is silting-discrete.
Hence, it is $\tau$-tilting finite.
\item Assume that $\Lambda$ is nonlocal.
If $T_n(\Lambda)$ is $\tau$-tilting finite, then we have $n\leq4$.
\end{enumerate}
\end{proposition}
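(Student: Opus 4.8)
The plan is to treat the two parts separately, reducing each to results already established above.

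For part (1), I would first record that the linearly oriented path algebra $K\overrightarrow{A_n}$ is hereditary and representation-finite, hence a representation-finite piecewise hereditary algebra, so it is silting-discrete by \cite{Ai}. Using the identification $T_n(\Lambda)\cong\Lambda\otimes_KK\overrightarrow{A_n}$ recorded earlier, I would then apply Theorem \ref{sd} with the local algebra taken to be $R:=\Lambda$ and the silting-discrete algebra taken to be $K\overrightarrow{A_n}$: the theorem yields that $\Gamma=\Lambda\otimes_KK\overrightarrow{A_n}\cong T_n(\Lambda)$ is silting-discrete. The final ``hence'' is immediate, since a silting-discrete algebra is $\tau$-tilting finite. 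This part is essentially a direct corollary of Theorem \ref{sd}, so there is no real obstacle; one only needs the (standard) input that $K\overrightarrow{A_n}$ is silting-discrete.

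For part (2), I would argue the contrapositive: if $n\geq5$ then $T_n(\Lambda)$ is $\tau$-tilting infinite. The key step is to produce a $\tau$-tilting infinite factor algebra of $T_n(\Lambda)$. Since $\Lambda$ is nonlocal and ring-indecomposable, its Gabriel quiver is connected with at least two vertices, so there is an arrow between two distinct vertices, and one obtains an algebra epimorphism $\Lambda\twoheadrightarrow K\overrightarrow{A_2}$ (this is precisely the reduction already invoked in the proof of Proposition \ref{third}(2)). Tensoring this epimorphism with $K\overrightarrow{A_n}$ over the field $K$ preserves surjectivity, so
\[
T_n(\Lambda)\cong\Lambda\otimes_KK\overrightarrow{A_n}\twoheadrightarrow K\overrightarrow{A_2}\otimes_KK\overrightarrow{A_n}\cong T_2(K\overrightarrow{A_n}),
\]
the commutative ladder of degree $n$.

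By Example \ref{CL}, the commutative ladder $T_2(K\overrightarrow{A_n})$ is $\tau$-tilting finite precisely when $n\leq4$; hence for $n\geq5$ it is $\tau$-tilting infinite. Since any factor algebra of a $\tau$-tilting finite algebra is again $\tau$-tilting finite \cite[Theorem 5.12(d)]{DIRRT}, the existence of the $\tau$-tilting infinite factor $T_2(K\overrightarrow{A_n})$ forces $T_n(\Lambda)$ itself to be $\tau$-tilting infinite for every $n\geq5$; equivalently, $T_n(\Lambda)$ being $\tau$-tilting finite implies $n\leq4$. The only delicate point is the construction of the epimorphism $\Lambda\twoheadrightarrow K\overrightarrow{A_2}$ in full generality (allowing loops and multiple arrows in the quiver of $\Lambda$); I would handle it by passing to the quotient by a suitable two-sided ideal that annihilates all primitive idempotents outside a fixed arrow, together with the remaining arrows and $\rad^2\Lambda$. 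As this reduction is already used earlier in the paper, I expect it to be routine rather than the main obstacle.
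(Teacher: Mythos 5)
Your proposal is correct and follows essentially the same route as the paper: part (1) is exactly the paper's application of Theorem \ref{sd} to $T_n(\Lambda)\cong\Lambda\otimes_KK\overrightarrow{A_n}$ with $R:=\Lambda$ local and $K\overrightarrow{A_n}$ silting-discrete, and part (2) is the same reduction via an epimorphism onto a commutative ladder handled by Example \ref{CL} (the paper truncates to $T_5(K\overrightarrow{A_2})\simeq T_2(K\overrightarrow{A_5})$ rather than mapping onto $T_2(K\overrightarrow{A_n})$, an immaterial difference).
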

\begin{proof}
%We obtain that $T_n(K[x]/(x^2))$ is presented by the quiver
%\[\xymatrix{
%1 \ar@(lu,ru)^{\alpha}\ar[r]_{\beta} & 2 \ar@(lu,ru)^{\alpha}\ar[r]_{\beta} & \cdots \ar[r]_{\beta} & n \ar@(lu,ru)^{\alpha}
%}\]
%with relations $\alpha^2=0$ and $\alpha\beta=\beta\alpha$.
%It follows from \cite[Theorem 4.8]{AK} that this has the same poset structure of support $\tau$-tilting modules as $K\overrightarrow{A_n}$, which is $\tau$-tilting finite.
%The algebra is isomorphic to $K[x]/(x^\ell)\otimes_KK\overrightarrow{A_n}$, and then apply Theorem \ref{sd}.
(1) The algebra $T_n(\Lambda)$ is isomorphic to $\Lambda\otimes_KK\overrightarrow{A_n}$, and then apply Theorem \ref{sd}.

(2) Let $n\geq5$.
As $\Lambda$ is nonlocal,
we obtain algebra epimorphisms $T_n(\Lambda)\to T_5(K\overrightarrow{A_2})\simeq T_2(K\overrightarrow{A_5})$.
The target is not $\tau$-tilting finite by Example \ref{CL},
whence neither is $T_n(\Lambda)$.
%So, we have $n\leq4$.
\end{proof}

In the rest of this section, we explore when $T_n(\Lambda)$ is $\tau$-tilting finite for $n\geq3$.

First, we treat radical-square-zero Nakayama algebras, which play a role in our goal.

\begin{lemma}\label{n=4}
Let $\Lambda$ be a nonlocal Nakayama algebra with radical square zero.
If $\Lambda\neq K\overrightarrow{A_2}$, then $T_4(\Lambda)$ is $\tau$-tilting infinite.
\end{lemma}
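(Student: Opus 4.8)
The plan is to exploit the established toolkit for radical-square-zero algebras, namely that the $\tau$-tilting finiteness of such an algebra is governed by its separated quiver via Adachi's theorem \cite[Theorem 3.1]{Ad1}, together with the fact that $\tau$-tilting finiteness passes to factor algebras \cite[Theorem 5.12(d)]{DIRRT}. Since $\Lambda$ is a nonlocal radical-square-zero Nakayama algebra with $\Lambda \neq K\overrightarrow{A_2}$, it has at least two simple modules, and its Gabriel quiver is either the linearly oriented $A_r$-quiver with $r \geq 3$ (linear Nakayama) or a cyclically oriented $\widetilde{A_{r-1}}$-quiver with $r \geq 2$ (cyclic Nakayama), in each case with radical square zero. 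First I would fix attention on the smallest such algebras, since it suffices to produce one factor algebra (or one idempotent truncation combined with a radical-power factor) of $T_4(\Lambda)$ whose separated quiver has a connected component that is not of type $A_m$; $\tau$-tilting infiniteness of that factor then forces $T_4(\Lambda)$ to be $\tau$-tilting infinite.

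The key observation I would carry out is to identify $T_4(\Lambda)$ with $\Lambda \otimes_K K\overrightarrow{A_4}$ and to pass to the radical-square-zero quotient, so that the analysis becomes purely combinatorial on quivers. The separated-quiver machinery developed for Theorem \ref{necessary} shows that whenever the separated quiver of an algebra (with loopless Gabriel quiver) has a connected component that is not of type $A_m$ or $\widetilde{A_m}$, and more strongly whenever one can locate a vertex of degree at least $3$ or an $\widetilde{A_m}$-component, the second triangular matrix algebra is $\tau$-tilting infinite. Here the relevant step is to compute the Gabriel quiver of a suitable radical-square-zero factor of $T_4(\Lambda)$: for the linear case this will be a grid-shaped quiver built from $\overrightarrow{A_r}$ stacked four times, and for the cyclic case a corresponding cylinder. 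In both cases I would locate, inside the separated quiver of this factor, a connected component containing a vertex of degree $\geq 3$ with four distinct characters around it (yielding a $\widetilde{E_6}$-type subquiver), or failing that an $\widetilde{A_m}$-component (yielding a $\widetilde{D_5}$-type subquiver); either configuration gives $\tau$-tilting infiniteness exactly as in the proof of Theorem \ref{necessary}.

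The cleanest route, which I expect to be the main simplification, is to reduce to the single critical algebra: by Proposition \ref{polynomial}(2) we already know $n \leq 4$ is forced, so the content of the lemma is entirely about the boundary case $n=4$. I would therefore exhibit an algebra epimorphism from $T_4(\Lambda)$ onto a concrete small radical-square-zero algebra whose separated quiver is visibly non-$A_m$. Because $\Lambda \neq K\overrightarrow{A_2}$ and $\Lambda$ is nonlocal Nakayama with radical square zero, $\Lambda$ surjects onto either $K\overrightarrow{A_3}/\rad^2$ (equivalently the Nakayama algebra with three simples and radical square zero, i.e.\ $\Lambda$ itself if linear of length $3$) or the two-simple cyclic radical-square-zero algebra; in the remaining base case $\Lambda = K\overrightarrow{A_2}/\rad^2 = K\overrightarrow{A_2}$ is excluded by hypothesis. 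Thus $T_4(\Lambda)$ surjects onto $T_4$ of one of these small algebras, and it remains to check $\tau$-tilting infiniteness for these finitely many base cases by drawing the separated quiver of the corresponding radical-square-zero quotient.

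The hard part will be the explicit separated-quiver computation for the base algebras: one must verify that the grid/cylinder quiver arising from $T_4$ of a two- or three-simple radical-square-zero Nakayama algebra genuinely produces a component with a degree-$\geq 3$ vertex surrounded by four \emph{distinct} characters (so that the $\widetilde{E_6}$ lies in the separated quiver of $T_2$ of the relevant factor with distinct characters, as required to invoke the argument of Theorem \ref{necessary}), rather than collapsing to an $A_m$-type component. I would handle this by writing $T_4(\Lambda)$ as an iterated $T_2$, applying Theorem \ref{necessary} at the outer layer so that it suffices to show the separated quiver of an inner factor $T_2(\Lambda')$ already fails the $A_m$-condition, and then reading off the branch point from the quiver of $\Lambda'$ directly. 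The finitely many verifications are routine once the quivers are drawn, but pinning down that the characters around the branch vertex are pairwise distinct is the one place genuine care is needed.
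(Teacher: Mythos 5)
There is a genuine gap: the central tool you propose---passing to a radical-square-zero quotient of $T_4(\Lambda)$ and inspecting its separated quiver---provably cannot establish this lemma. For every base case the lemma requires, the separated quiver of $T_4(\cdot)/\rad^2$ is a disjoint union of quivers of type $A_m$. For instance, for $\Gamma=K\overrightarrow{A_3}/\rad^2 K\overrightarrow{A_3}$ the Gabriel quiver of $T_4(\Gamma)$ is a $3\times 4$ grid; in the separated quiver every unprimed vertex $(i,j)$ has out-degree at most $2$ (to $(i+1,j)'$ and $(i,j+1)'$) and in-degree $0$, every primed vertex has in-degree at most $2$ and out-degree $0$, and the corner vertices are leaves, so each component is a zigzag path of type $A$. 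The same computation for the two-simple cyclic radical-square-zero algebra yields two components of type $A_8$. Hence these quotients are $\tau$-tilting \emph{finite}, and no configuration of type $\widetilde{E_6}$ or $\widetilde{D_5}$ can be found there; the obstruction lives in higher radical layers and relations, not in the quiver. A secondary error compounds this: $T_4(\Lambda)$ is \emph{not} an iterated $T_2$, since $T_2(T_2(\Lambda))\simeq\Lambda\otimes_K K\overrightarrow{A_2}\otimes_K K\overrightarrow{A_2}$ is the tensor product with the commutative square, which is missing the $(2,3)$-entry of $T_4(\Lambda)$; so "applying Theorem \ref{necessary} at the outer layer" is not available (and would give nothing anyway, since the separated quiver of $T_2(\Gamma)$ is again a union of type-$A$ components).

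Your reduction to the finitely many base algebras is fine and matches the paper's spirit, but the verification of those base cases needs entirely different tools, which is what the paper supplies: for linear Nakayama $\Lambda$ with at least $3$ simples, $T_4(\Lambda)$ is strongly simply-connected and representation-infinite by Leszczynski--Skowronski, so it is $\tau$-tilting infinite by Wang's theorem \cite[Theorem 2.6]{W}; the cyclic case with at least $3$ simples is reduced to this by an epimorphism onto $T_4(K\overrightarrow{A_3}/\rad^2 K\overrightarrow{A_3})$; and for the two-simple cyclic case one locates a tame concealed factor algebra of type $\widetilde{E_7}$ via the Happel--Vossieck list. To repair your argument you would need to import one of these mechanisms (or exhibit an explicit infinite family of bricks); the separated-quiver criterion alone is a dead end here.
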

\begin{proof}
Assume that $\Lambda$ is linear Nakayama with at least 3 simple modules.
Since $T_4(\Lambda)$ is strongly simply-connected and representation-infinite by \cite[Theorem 6.2]{LS}, we obtain from \cite[Theorem 2.6]{W} that it is $\tau$-tilting infinite.

If $\Lambda$ is cyclic Nakayama with at least 3 simple modules,
then there is an algebra epimorphism $\Lambda\to \Gamma$, which induces $T_4(\Lambda)\to T_4(\Gamma)$.
Here, $\Gamma:=K\overrightarrow{A_3}/\rad^2K\overrightarrow{A_3}$.
As above, this implies that $T_4(\Lambda)$ is $\tau$-tilting infinite.

We show that $T_4(\Lambda)$ is not $\tau$-tilting finite if $\Lambda$ is a radical-square-zero cyclic Nakayama algebra with precisely 2 simple modules.
Then one sees from the Happel--Vossieck List \cite{HV} that it has a tame concealed factor algebra of type $\widetilde{E_7}$ as follows:
\[\xymatrix{
\bullet & \bullet \ar[r]\ar[d] & \bullet \ar[r]\ar[d] & \bullet  \\
\bullet \ar[r]\ar[u]& \bullet \ar[r] & \bullet  & \bullet \ar[u]
}\]
Hence, it turns out that $T_4(\Lambda)$ is not $\tau$-tilting finite.
\end{proof}

We solve the problem for the case that given algebras have at least 3 simple modules.

\begin{theorem}\label{s3}
Let $\Lambda$ be an algebra given by a quiver $Q$ which has no loops and at least 3 vertices.
Let $n\geq3$.
Then the following are equivalent:
\begin{enumerate}
\item $T_n(\Lambda)$ is $\tau$-tilting finite;
\item It is representation-finite;
\item $n=3$ and $\Lambda$ is a Nakayama algebra with radical square zero.
\end{enumerate}
\end{theorem}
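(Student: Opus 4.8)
The plan is to prove the cyclic chain of implications $(2)\Rightarrow(1)\Rightarrow(3)\Rightarrow(2)$. Of these, $(2)\Rightarrow(1)$ is the general fact that a representation-finite algebra is $\tau$-tilting finite, and $(3)\Rightarrow(2)$ is the representation-finiteness of $T_3$ of a radical-square-zero Nakayama algebra, which I would take from the Leszczyński--Skowroński classification of representation-finite triangular matrix algebras over Nakayama algebras \cite{LS}. So the real content is $(1)\Rightarrow(3)$, and this is where I would spend the effort. First I note that $Q$ has no loops and at least three vertices, so $\Lambda$ is nonlocal and Proposition \ref{polynomial}(2) gives $n\in\{3,4\}$; moreover $T_3(\Lambda)$ is a factor algebra of $T_n(\Lambda)$ (delete the idempotents attached to the rows $4,\dots,n$, which form a convex terminal segment of $\overrightarrow{A_n}$), hence it is $\tau$-tilting finite, and so is its radical-square-zero factor $T_3(\Lambda)/\rad^2T_3(\Lambda)$.

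The first step is to force $\Lambda$ to be Nakayama. Since $T_3(\Lambda)/\rad^2 T_3(\Lambda)$ is $\tau$-tilting finite and $Q$, hence the Gabriel quiver of $T_3(\Lambda)$, has no loops, Adachi's theorem \cite{Ad1} tells me that its separated quiver is a disjoint union of Dynkin quivers of type $A$. I would then read off degrees in the Gabriel quiver of $T_3(\Lambda)\cong\Lambda\otimes_KK\overrightarrow{A_3}$, whose vertices are the pairs $(v,j)$ with $v$ a vertex of $Q$ and $j\in\{1,2,3\}$: the vertex $(v,1)$ has out-degree $(\text{out-deg}_Q v)+1$ and the vertex $(v,3)$ has in-degree $(\text{in-deg}_Q v)+1$, the extra arrow in each case coming from $\overrightarrow{A_3}$. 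A component of type $A$ has all vertices of degree at most two, so a vertex of degree three in the separated quiver is impossible; this forces $\text{out-deg}_Q v\le 1$ and $\text{in-deg}_Q v\le 1$ for every $v$. As $\Lambda$ is ring-indecomposable, $Q$ is connected, and a connected quiver all of whose in- and out-degrees are at most one is either $\overrightarrow{A_r}$ or an oriented cycle; hence $\Lambda$ is a Nakayama algebra.

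The hard part is then to prove $\rad^2\Lambda=0$, because the separated-quiver criterion only sees the Gabriel quiver and is blind to the relations of $\Lambda$. Here I would argue by contradiction: if $\rad^2\Lambda\ne0$ there is a nonzero path of length two, and restricting to the three consecutive vertices it runs through and factoring out the remaining idempotents produces an algebra epimorphism $\Lambda\twoheadrightarrow K\overrightarrow{A_3}$ (in the cyclic case I would first delete one arrow lying off this path, which keeps at least three simples and keeps the path nonzero, reducing to the linear case). Applying $T_3$ gives an epimorphism $T_3(\Lambda)\twoheadrightarrow T_3(K\overrightarrow{A_3})$, and $T_3(K\overrightarrow{A_3})$ is strongly simply-connected and representation-infinite by \cite{LS}, hence $\tau$-tilting infinite by \cite{W}; since $\tau$-tilting finiteness passes to factor algebras, this contradicts the $\tau$-tilting finiteness of $T_3(\Lambda)$. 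Thus $\Lambda$ is a radical-square-zero Nakayama algebra, and having at least three simple modules it is not $K\overrightarrow{A_2}$; Lemma \ref{n=4} then shows $T_4(\Lambda)$ is $\tau$-tilting infinite, so the case $n=4$ is impossible and $n=3$. This yields $(3)$ and closes the chain.

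I expect the radical-square-zero step to be the main obstacle: deriving the Nakayama shape is a clean degree count on the separated quiver, but eliminating longer relations requires the reduction to the single explicit algebra $T_3(K\overrightarrow{A_3})$ together with the representation-type input of \cite{LS} and the bridge between representation-finiteness and $\tau$-tilting finiteness for strongly simply-connected algebras from \cite{W}.
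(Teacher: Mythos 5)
Your overall architecture ($(2)\Rightarrow(1)$ trivial, $(3)\Leftrightarrow(2)$ from \cite{LS}, all work in $(1)\Rightarrow(3)$) matches the paper, and the second half of your argument is sound: the elimination of a nonzero length-two path via an epimorphism onto $T_3(K\overrightarrow{A_3})$, which is strongly simply-connected and representation-infinite by \cite{LS} and hence $\tau$-tilting infinite by \cite{W}, is a legitimate variant of the paper's step (the paper instead exhibits a tame concealed factor of type $\widetilde{E_7}$ from the Happel--Vossieck list), and the final reduction to $n=3$ via Proposition \ref{polynomial}(2) and Lemma \ref{n=4} is exactly the paper's.

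However, your first step --- forcing $\Lambda$ to be Nakayama --- has a genuine gap. You misstate Adachi's criterion: for a radical-square-zero algebra, $\tau$-tilting finiteness does \emph{not} require the separated quiver to be a disjoint union of type $A$ Dynkin quivers; components (more precisely, ``single subquivers'' with pairwise distinct characters) of type $D$ and $E$ are perfectly admissible, exactly as for Gabriel's representation-finiteness criterion. Consequently a vertex of degree $3$ in the separated quiver of $T_3(\Lambda)/\rad^2T_3(\Lambda)$ only produces a $D_4$ star and yields no contradiction. Worse, the strategy of passing to the radical-square-zero quotient cannot work at all here: take $A$ to be the path algebra of $\xymatrix{\bullet & \bullet \ar[l]\ar[r] & \bullet}$. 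A direct computation shows that the separated quiver of $T_3(A)/\rad^2T_3(A)$ is a disjoint union $D_4\sqcup E_6\sqcup A_5\sqcup(\text{isolated points})$, all Dynkin, so this quotient is even representation-finite --- yet $T_3(A)$ itself is $\tau$-tilting infinite. The obstruction lives in $T_3(A)$ but is invisible in its radical square. This is precisely why the paper does not argue via separated quivers at this stage: it maps $T_n(\Lambda)$ onto $T_3(A)$ for $A$ the path algebra of $\xymatrix{\bullet & \bullet \ar[l]\ar[r] & \bullet}$ or $\xymatrix{\bullet \ar[r] & \bullet & \bullet \ar[l]}$ and locates a tame concealed algebra of type $\widetilde{E_7}$ among the factor algebras of $T_3(A)$ using the Happel--Vossieck list. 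You would need to replace your degree count by an argument of this kind (or by the \cite{LS}+\cite{W} route applied to $T_3(A)$ itself, if its strong simple connectedness can be verified) to close the gap.
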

\begin{proof}
It is trivial that (2) implies (1).
It follows from \cite[Theorem 6.1]{LS} that the implications (2)$\Leftrightarrow$(3) hold true.

We show that (1) implies (3). 
One may suppose that $Q$ admits no multiple arrow.
Assume that $Q$ has $\xymatrix{\bullet & \bullet \ar[r]\ar[l] & \bullet}$ or $\xymatrix{\bullet \ar[r] & \bullet  & \bullet \ar[l]}$
as a subquiver.
Then, we see that there is an algebra epimorphism $T_n(\Lambda)\to T_3(A)$, where $A$ is the path algebra of $\xymatrix{\bullet & \bullet \ar[r]\ar[l] & \bullet}$ or $\xymatrix{\bullet \ar[r] & \bullet  & \bullet \ar[l]}$.
By the Happel--Vossieck List \cite{HV}, we observe that a tame concealed algebra of type $\widetilde{E_7}$ appears as a factor algebra of $T_3(A)$, which is $\tau$-tilting infinite, and hence, so is $T_n(\Lambda)$.
Thus, we find out that $\Lambda$ is a Nakayama algebra.
As a similar argument above, we deduce the fact that $Q$ does not admit $\xymatrix{\bullet \ar[r] & \bullet \ar[r] & \bullet}$ without zero relation,
which implies that $\Lambda$ has radical square zero.
Finally, apply Proposition \ref{polynomial}(2) and Lemma \ref{n=4} to get $n=3$.
\end{proof}

Let us turn to the case where a given algebra has precisely 2 simple modules.
We prepare a lemma to reduce the length.

\begin{lemma}\label{2rad2}
Let $\Lambda$ be a cyclic Nakayama algebra with precisely 2 simple modules.
Then we have a poset isomorphism $\sttilt T_n(\Lambda)\simeq \sttilt T_n(\Lambda/\rad^2\Lambda)$.
\end{lemma}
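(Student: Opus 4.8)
The plan is to realize $T_n(\Lambda/\rad^2\Lambda)$ as a quotient of $T_n(\Lambda)$ and to show that, at the level of support $\tau$-tilting theory, this quotient changes nothing. Write $\bar\Lambda:=\Lambda/\rad^2\Lambda$ and let $I$ be the kernel of the induced surjection $T_n(\Lambda)\twoheadrightarrow T_n(\bar\Lambda)$; concretely $I$ is the ideal of upper triangular matrices with all entries in $\rad^2\Lambda$, so that $\mod T_n(\bar\Lambda)$ is exactly the full subcategory of $\mod T_n(\Lambda)$ consisting of modules annihilated by $I$. Via the poset isomorphism $\sttilt A\cong\operatorname{tors}A$ of \cite{AIR} (functorially finite torsion classes, ordered by inclusion), it suffices to produce a poset isomorphism $\operatorname{tors}T_n(\Lambda)\simeq\operatorname{tors}T_n(\bar\Lambda)$. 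Using the correspondence between torsion classes and the bricks labelling the arrows of their Hasse quivers, together with the fact that each functorially finite torsion class is the extension closure of the bricks it contains, this will reduce to the single statement: \emph{every brick over $T_n(\Lambda)$ is annihilated by $I$}. Indeed, if this holds, then the restriction map $\mathcal T\mapsto\mathcal T\cap\mod T_n(\bar\Lambda)$ and the extension-closure map are mutually inverse poset isomorphisms, since a torsion class built from $I$-annihilated bricks is recovered from its $I$-annihilated part.

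The heart of the argument, and the step I would carry out first, is the brick-annihilation claim, for which I would use a single central element. Since $\Lambda$ is cyclic Nakayama with the two simples $1,2$, write $\alpha\colon1\to2$ and $\beta\colon2\to1$ for the two arrows and set $z:=\alpha\beta+\beta\alpha\in\rad^2\Lambda$, the sum of the two length-two cycles. A direct check shows $z$ is central in $\Lambda$: it commutes with $e_1,e_2,\alpha,\beta$ (for instance $\alpha z=\alpha\beta\alpha=z\alpha$). Moreover $e_1z=\alpha\beta$ and $e_2z=\beta\alpha$ together generate $\rad^2\Lambda$, so $\langle z\rangle=\rad^2\Lambda$ as a two-sided ideal. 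Tensoring with $K\overrightarrow{A_n}$, the element $z\otimes1$ is central in $T_n(\Lambda)=\Lambda\otimes_KK\overrightarrow{A_n}$, lies in its radical, and generates exactly $I$. Hence for any $T_n(\Lambda)$-module $M$, right multiplication $\phi_z\colon M\to M$ by $z\otimes1$ is a nilpotent $T_n(\Lambda)$-endomorphism with $\phi_z=0$ if and only if $MI=0$. If $M$ is a brick, then $\End_{T_n(\Lambda)}(M)=K$ forces the nilpotent $\phi_z$ to vanish, whence $MI=0$ and $M$ is a $T_n(\bar\Lambda)$-module. The converse (every $T_n(\bar\Lambda)$-brick is a brick over $T_n(\Lambda)$) is immediate from the full embedding $\mod T_n(\bar\Lambda)\hookrightarrow\mod T_n(\Lambda)$, so the two algebras have literally the same bricks.

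I expect the main obstacle to be not the brick-annihilation itself---which the central element $z$ settles cleanly---but the passage from ``same bricks'' to ``isomorphic posets of support $\tau$-tilting modules''. Here I would lean on the torsion-class and brick-labelling machinery of \cite{DIRRT}: the surjection $T_n(\Lambda)\to T_n(\bar\Lambda)$ induces a surjective lattice morphism $\operatorname{tors}T_n(\Lambda)\to\operatorname{tors}T_n(\bar\Lambda)$ whose failure to be injective is governed precisely by the bricks of $T_n(\Lambda)$ lying outside $\mod T_n(\bar\Lambda)$; as there are none, the morphism is a poset isomorphism. Two points would need care: justifying that every functorially finite torsion class is generated by its bricks (so that the restriction and extension-closure maps are genuinely inverse to one another), and checking that the central-element computation is unaffected by the admissible relations defining $\Lambda$, i.e.\ that $z$ stays central with $\langle z\rangle=\rad^2\Lambda$ in every cyclic Nakayama algebra with two simples, regardless of Loewy length. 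Finally, transporting the isomorphism back along $\sttilt\cong\operatorname{tors}$ yields the claimed poset isomorphism $\sttilt T_n(\Lambda)\simeq\sttilt T_n(\bar\Lambda)$.
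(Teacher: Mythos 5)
Your central computation is exactly the paper's: both proofs hinge on the element $z=xy+yx$ (your $\alpha\beta+\beta\alpha$), which is central, lies in the radical, generates $\rad^2\Lambda$, and hence gives a central radical element $z\otimes 1$ of $T_n(\Lambda)$ generating the kernel of $T_n(\Lambda)\twoheadrightarrow T_n(\bar\Lambda)$, where $\bar\Lambda=\Lambda/\rad^2\Lambda$. The difference lies in how you conclude: the paper at this point simply invokes the reduction theorem of Eisele--Janssens--Raedschelders \cite[Theorem 11]{EJR}, which says precisely that factoring out an ideal generated by a central element of the radical induces a poset isomorphism on $\sttilt$, whereas you attempt to re-derive that theorem via bricks and torsion classes. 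Your brick-annihilation step is fine (a central radical element acts as a nilpotent endomorphism, hence as zero on a brick), but the passage from ``same bricks'' to ``isomorphic posets'' is where your sketch is thinnest: the extension closure in $\mod T_n(\Lambda)$ of a family of $I$-annihilated modules is in general \emph{not} contained in $\mod T_n(\bar\Lambda)$ (already for $K[x]/(x^2)$ the extension closure of the unique brick is the whole module category), so your restriction and extension-closure maps do not invert each other at the level of subcategories as stated. To see that a torsion class $\mathcal T$ of $T_n(\Lambda)$ is recovered from $\mathcal T\cap\mod T_n(\bar\Lambda)$ you need centrality a second time: $MI=Mz$ is the image of the endomorphism $\phi_z$, hence a quotient of $M$ and therefore again in $\mathcal T$, so the filtration $M\supseteq MI\supseteq MI^2\supseteq\cdots$ lives in $\mathcal T$ and has $I$-annihilated subquotients. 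One must also check that functorial finiteness is preserved in both directions. All of this is exactly the content of \cite[Theorem 11]{EJR} (see also \cite{DIRRT}), so once you have exhibited $z$ the cleanest route is to cite that theorem, as the paper does; your version is correct in outline but would need these points filled in to stand on its own.
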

\begin{proof}
By assumption, $\Lambda$ is given by the quiver $\xymatrix{1 \ar@<2pt>[r]^x & 2 \ar@<2pt>[l]^y}$.
Then it is seen that $z:=xy+yx$ belongs to the center and the radical of $\Lambda$,
whence $zI$ is in those of $T_n(\Lambda)$.
Here, $I$ is the identity matrix.
We observe that the factor algebra of $T_n(\Lambda)$ by the ideal generated by $zI$ is isomorphic to $T_n(\Lambda/\rad^2\Lambda)$,
which completes the proof by \cite[Theorem 11]{EJR}.
\end{proof}

Now, we totally realize our goal.

\begin{theorem}\label{s2}
Let $\Lambda$ be an algebra whose quiver has precisely 2 vertices and no loops.
Let $n\geq3$.
Then the following are equivalent:
\begin{enumerate}
\item $T_n(\Lambda)$ is $\tau$-tilting finite;
\item $n=3$ and $\Lambda$ is a Nakayama algebra,
or $n=4$ and $\Lambda=K\overrightarrow{A_2}$.
\end{enumerate}
\end{theorem}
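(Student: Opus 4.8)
The plan is to reduce the whole equivalence to the behaviour of second triangular matrix algebras and to two already-settled positive inputs (Example \ref{CL} and the silting-discreteness machinery of Theorem \ref{sd}), thereby isolating a single genuinely new positive statement.

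First I would dispose of the non-Nakayama algebras. Since $\Lambda$ has two simple modules it is nonlocal, so Proposition \ref{polynomial}(2) already forces $n\le 4$ and only $n=3,4$ need to be examined. Because $Q$ has two vertices and no loops, ring-indecomposability leaves precisely the shapes: $\Lambda=K\overrightarrow{A_2}$; a generalized Kronecker algebra (at least two parallel arrows in a single direction); or a two-cycle $\xymatrix{1\ar@<2pt>[r] & 2\ar@<2pt>[l]}$, which is a cyclic Nakayama algebra exactly when there is one arrow in each direction and otherwise carries a multiple arrow. Hence $\Lambda$ is non-Nakayama if and only if $Q$ has a multiple arrow, in which case the separated quiver of $\Lambda$ has a component containing a double arrow, i.e.\ of type $\widetilde{A_1}$, which is not of type $A_n$. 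Theorem \ref{necessary} then gives that $T_2(\Lambda)$ is $\tau$-tilting infinite, and as there is an algebra epimorphism $T_n(\Lambda)=\Lambda\otimes_KK\overrightarrow{A_n}\to T_2(\Lambda)$ (killing the vertices $3,\dots,n$ of $K\overrightarrow{A_n}$), so is $T_n(\Lambda)$ by \cite{DIRRT}. Therefore (1) forces $\Lambda$ to be Nakayama.

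Next I would split the Nakayama case. If $\Lambda=K\overrightarrow{A_2}$, then $T_n(\Lambda)\simeq T_2(K\overrightarrow{A_n})$, so by Example \ref{CL} it is $\tau$-tilting finite precisely for $n\le 4$; combined with $n\ge3$ this gives exactly $n\in\{3,4\}$, as in (2). If $\Lambda$ is cyclic Nakayama with two simples, Lemma \ref{2rad2} gives a poset isomorphism $\sttilt T_n(\Lambda)\simeq\sttilt T_n(\Lambda_0)$, where $\Lambda_0:=\Lambda/\rad^2\Lambda$ is the radical-square-zero cyclic Nakayama algebra with two simples, so $\tau$-tilting finiteness of $T_n(\Lambda)$ is equivalent to that of $T_n(\Lambda_0)$. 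Since $\Lambda_0$ is nonlocal, radical-square-zero Nakayama and different from $K\overrightarrow{A_2}$, Lemma \ref{n=4} shows $T_4(\Lambda_0)$ is $\tau$-tilting infinite, so $n=4$ is impossible and only $n=3$ can survive, again matching (2). It therefore remains only to prove the positive assertion that $T_3(\Lambda_0)$ is $\tau$-tilting finite; every other case of the equivalence follows from the statements assembled above.

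Proving that $T_3(\Lambda_0)$ is $\tau$-tilting finite is where I expect the real difficulty to lie. By \cite{DIJ} it suffices to bound the number of bricks of $T_3(\Lambda_0)$, and the most natural first attempt is to mimic Example \ref{ce} and exhibit $T_3(\Lambda_0)$ as an epimorphic image of a silting-discrete algebra produced by Theorem \ref{sd}. The obvious candidate $T_3(K\overrightarrow{A_2})\otimes_KK[x]/(x^2)$ does \emph{not} work: the Gabriel quiver of $T_3(\Lambda_0)$ is the $2\times3$ grid whose vertical rungs are two-cycles, and such a two-cycle, forced by the self-injectivity of $\Lambda_0$, cannot arise as a subquiver of the grid-with-loops quiver of that candidate, so no algebra epimorphism exists; any Theorem \ref{sd}-type cover is obstructed in the same way, as the extra arrow of the two-cycle would have to come from a nonlocal tensor factor. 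I would therefore argue directly. One route is covering theory: the universal cover of $\Lambda_0$ is the radical-square-zero algebra of the doubly infinite linear $A_\infty$-quiver, whose finite convex subcategories are copies of $K\overrightarrow{A_2}$, and $T_3(K\overrightarrow{A_2})\simeq T_2(K\overrightarrow{A_3})$ is silting-discrete (being derived equivalent to the path algebra of type $E_6$, as in Example \ref{ce}); I would try to descend the resulting local finiteness of bricks along the induced Galois covering onto $T_3(\Lambda_0)$, the crux being to identify exactly which indecomposables over $T_3(\Lambda_0)$ are bricks and to confine them to finitely many orbits. Should a clean covering argument prove elusive, the fallback is to enumerate $\sttilt T_3(\Lambda_0)$ by a direct silting-mutation computation, verifying that the interval between $T_3(\Lambda_0)$ and $T_3(\Lambda_0)[1]$ contains only finitely many silting objects and hence that $T_3(\Lambda_0)$ is silting-discrete. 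This single positive input, together with the reductions above, yields the stated equivalence.
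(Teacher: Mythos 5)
Your reduction scheme is essentially the paper's: Proposition \ref{polynomial}(2) to force $n\le4$, exclusion of multiple arrows to force $\Lambda$ Nakayama, Example \ref{CL} for $\Lambda=K\overrightarrow{A_2}$, Lemma \ref{2rad2} to pass to $\Lambda_0:=\Lambda/\rad^2\Lambda$ in the cyclic case, and Lemma \ref{n=4} to kill $n=4$ there. (The paper dispatches the multiple-arrow case even more directly: $\Lambda$ is a factor of $T_n(\Lambda)$ and the generalized Kronecker algebra is $\tau$-tilting infinite, so no detour through Theorem \ref{necessary} and $T_2(\Lambda)$ is needed, though your route is also valid.) You have also correctly isolated the one remaining positive claim --- that $T_3(\Lambda_0)$ is $\tau$-tilting finite for $\Lambda_0$ the radical-square-zero cyclic Nakayama algebra with two simples --- and correctly observed that the epimorphism trick of Example \ref{ce} does not apply to it.

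However, that remaining claim is exactly where your argument stops being a proof. Both of your proposed routes are left as programs: the covering-theoretic descent of brick-finiteness from $T_3(K\overrightarrow{A_2})$ along a Galois covering is not carried out (and is delicate, since controlling \emph{all} bricks of the quotient via the push-down, not just those in the image, is precisely the hard point), and the ``fallback'' mutation enumeration is not performed either. The paper closes this gap with a single citation: by \cite[Theorem~6.1]{LS} (Leszczy\'nski--Skowro\'nski's classification of tame/representation-finite triangular matrix algebras), $T_3(\Lambda_0)$ is in fact \emph{representation-finite}, hence $\tau$-tilting finite --- no brick count or silting computation is needed. Without that input (or a completed version of one of your two sketches), the implication (2)$\Rightarrow$(1) in the cyclic Nakayama case remains unproven, so the proposal as written has a genuine gap at its acknowledged crux.
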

\begin{proof}
If $T_n(\Lambda)$ is $\tau$-tilting finite, then we observe that $n\leq4$ by Proposition \ref{polynomial}(2), and $\Lambda$ is also $\tau$-tilting finite.
This implies that $\Lambda$ has no multiple arrow, so it is a Nakayama algebra.

Let $n=4$ and $\Lambda\neq K\overrightarrow{A_2}$; so $\Lambda$ is cyclic Nakayama.
By Lemma \ref{2rad2}, we can suppose that $\Lambda$ has radical square zero, but we then obtain from Lemma \ref{n=4} that $T_2(\Lambda)$ is not $\tau$-tilting finite, contrary.
Thus, if $n=4$, then we have $\Lambda=K\overrightarrow{A_2}$.

Let us show that the implication (2)$\Rightarrow$(1) holds true.
By Example \ref{CL}, we have only to check the case where $n=3$ and $\Lambda$ is cyclic Nakayama.
From Lemma \ref{2rad2}, one obtains $\sttilt T_3(\Lambda)\simeq \sttilt T_3(\Lambda/\rad^2 \Lambda)$,
which is a finite set because $T_3(\Lambda/\rad^2\Lambda)$ is representation-finite by \cite[Theorem 6.1]{LS}.
Thus, we have done.
\end{proof}

As a corollary of Theorems \ref{s3} and \ref{s2}, we get the following.

\begin{corollary}\label{sc3}
Let $\Lambda$ be a simply-connected algebra and $n\geq3$.
Then $T_n(\Lambda)$ is $\tau$-tilting finite if and only if it is representation-finite.
\end{corollary}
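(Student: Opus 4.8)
The plan is to derive the corollary directly from the classification in Theorems \ref{s3} and \ref{s2}, splitting according to the number of simple $\Lambda$-modules, i.e.\ the number of vertices of the Gabriel quiver $Q$ of $\Lambda$. Since representation-finite algebras are always $\tau$-tilting finite, the only content is to match the $\tau$-tilting finiteness criterion with representation-finiteness in each case. Throughout I would use two consequences of simple-connectedness: $Q$ has no loops (so that Theorems \ref{s3} and \ref{s2} apply), and $Q$ is triangular, so $\Lambda$ is never a cyclic Nakayama algebra.

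If $Q$ has at least three vertices, the corollary is immediate from Theorem \ref{s3}, whose conditions (1) and (2) are precisely ``$T_n(\Lambda)$ is $\tau$-tilting finite'' and ``$T_n(\Lambda)$ is representation-finite'' and are declared equivalent there. If $Q$ has a single vertex, then simple-connectedness forces $\Lambda=K$, and $T_n(K)$ is the hereditary algebra of type $A_n$, which is simultaneously representation-finite and $\tau$-tilting finite; so the equivalence holds trivially.

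The remaining case, $Q$ with exactly two vertices, is the substantive one. Simple-connectedness pins down $\Lambda$: a connected two-vertex algebra with no loops and trivial fundamental group has a tree as underlying graph, hence $\Lambda=K\overrightarrow{A_2}$; in particular the two-vertex cyclic Nakayama algebras allowed by Theorem \ref{s2} are excluded, since their quivers carry an oriented cycle. For $\Lambda=K\overrightarrow{A_2}$, Theorem \ref{s2} says $T_n(\Lambda)$ is $\tau$-tilting finite exactly for $n\in\{3,4\}$. On the other hand, $T_n(K\overrightarrow{A_2})\cong T_2(K\overrightarrow{A_n})$, so Example \ref{CL} says it is representation-finite iff $n\le4$, i.e.\ (for $n\ge3$) iff $n\in\{3,4\}$. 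Thus the two conditions coincide, completing this case.

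I expect the two-vertex case to be the only real obstacle, because Theorem \ref{s2} is phrased through explicit values of $n$ and explicit algebras rather than through representation-finiteness. The key is to feed in simple-connectedness to eliminate the cyclic Nakayama algebras and then use the representation-finiteness bookkeeping of Example \ref{CL} so that the criterion of Theorem \ref{s2} lines up exactly with representation-finiteness.
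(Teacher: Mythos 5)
Your proposal is correct and follows essentially the same route as the paper, which states Corollary \ref{sc3} without further argument as a direct consequence of Theorems \ref{s3} and \ref{s2}; your case split on the number of vertices, the use of simple-connectedness (triangularity) to force no loops and to exclude cyclic Nakayama algebras in the two-vertex case, and the comparison with Example \ref{CL} via $T_n(K\overrightarrow{A_2})\cong T_2(K\overrightarrow{A_n})$ is exactly the intended derivation.
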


Finally, we give a complete list of positive integers $n$ and $r$ such that
$T_n(\Lambda)$ is silting-discrete for $\Lambda:=K\overrightarrow{A_r}/\rad^2K\overrightarrow{A_r}$.

\begin{theorem}\label{lsd}
Let $\Lambda$ be a radical-square-zero linear Nakayama algebra with $r$ simple modules.
Then $T_n(\Lambda)$ is silting-discrete if and only if one of the following cases occurs:
(i) $n=1$;
(ii) $r=1$;
(iii) $n=2$ and $1<r\leq4$; 
(iv) $1<n\leq4$ and $r=2$.
\end{theorem}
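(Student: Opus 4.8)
The plan is to prove the equivalence by combining the $\tau$-tilting finiteness classification from Section 3 with the silting-discreteness machinery of Section 2, handling the ``if'' and ``only if'' directions separately. For the ``only if'' direction, observe that silting-discreteness implies $\tau$-tilting finiteness. Since $\Lambda = K\overrightarrow{A_r}/\rad^2 K\overrightarrow{A_r}$ is a radical-square-zero linear Nakayama algebra, I would invoke the classification in Theorems \ref{s3} and \ref{s2}: for $n\geq 3$, $T_n(\Lambda)$ is $\tau$-tilting finite only when $n=3$ and $\Lambda$ is radical-square-zero Nakayama (so any $r$, by Theorem \ref{s3}) or $n=4$ with $\Lambda=K\overrightarrow{A_2}$ (so $r=2$, by Theorem \ref{s2}). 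Combined with Proposition \ref{polynomial}(2), which forces $n\leq 4$ in the nonlocal case, this already carves out the candidate list. The cases $n=1$, $r=1$ (where $\Lambda$ is local and Proposition \ref{polynomial}(1) applies), and $n=2$ are then separated out as the remaining possibilities to analyze directly.

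First I would dispose of the easy positive cases. Case (i) $n=1$ is trivial since $T_1(\Lambda)=\Lambda$ is a radical-square-zero Nakayama algebra, which is silting-discrete. Case (ii) $r=1$ means $\Lambda=K[x]/(x^2)$ is local, so $T_n(\Lambda)$ is silting-discrete for all $n$ by Proposition \ref{polynomial}(1). For case (iv) with $1<n\leq 4$ and $r=2$, I would use the derived-equivalence argument already deployed in Example \ref{ce}: $T_n(K\overrightarrow{A_2})$ is derived equivalent to a path algebra of Dynkin type (for the relevant small $n$), hence silting-discrete, and then $T_n(\Lambda)=T_n(K\overrightarrow{A_2}\text{-style algebra})$ is obtained via Theorem \ref{sd} by tensoring with $K[x]/(x^2)$; more directly, $T_n(\Lambda)\simeq \Lambda\otimes_K K\overrightarrow{A_n}\simeq K[x]/(x^2)\otimes_K(K\overrightarrow{A_2}\otimes_K K\overrightarrow{A_n})\simeq K[x]/(x^2)\otimes_K T_n(K\overrightarrow{A_2})$, so Theorem \ref{sd} gives silting-discreteness once $T_n(K\overrightarrow{A_2})$ is known to be silting-discrete for $n\leq 4$.

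For case (iii), $n=2$ and $1<r\leq 4$, I would identify $T_2(\Lambda)$ with the commutative ladder $T_2(K\overrightarrow{A_r})$ modulo radical-square-zero relations; the positive silting-discreteness for $r\leq 4$ should again be extracted from a derived equivalence to a representation-finite hereditary (Dynkin) algebra together with the tensor-product trick of Theorem \ref{sd}, paralleling Example \ref{ce}. The genuine obstacle—and the step I expect to absorb most of the work—is proving the sharpness of the bounds, namely that silting-discreteness fails for $n=2,r\geq 5$ and for $n\in\{3,4\},r\geq 3$ (and $n=3,r=2$). For the $\tau$-tilting infinite cases this is immediate from the Section 3 classification, but the subtle cases are those that are $\tau$-tilting finite yet fail to be silting-discrete: here I would need to exhibit an infinite strictly descending chain of silting objects in some interval $T_n(\Lambda)\geq U\geq T_n(\Lambda)[k]$, typically by locating a suitable factor or convex subcategory whose Auslander–Reiten structure or separated quiver forces infinitely many silting objects between $\Gamma$ and $\Gamma[1]$. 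Pinning down exactly where the transition from finite to infinitely many silting objects occurs—and verifying it does not collapse under the poset isomorphisms—is the crux of the argument.
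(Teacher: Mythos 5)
Your proposal correctly identifies the shape of the problem, but it leaves precisely the hardest cases unresolved, and the single idea that the paper's proof hinges on is missing. The key point is that a radical-square-zero linear Nakayama algebra $\Lambda$ is derived equivalent to the \emph{hereditary} algebra $K\overrightarrow{A_r}$, hence $T_n(\Lambda)=\Lambda\otimes_KK\overrightarrow{A_n}$ is derived equivalent to $T_n(K\overrightarrow{A_r})$, and silting-discreteness is a derived invariant. This converts every case into a question about $T_n(K\overrightarrow{A_r})\simeq K\overrightarrow{A_r}\otimes_KK\overrightarrow{A_n}$, where the negative cases become visible as $\tau$-tilting \emph{infiniteness}: for $n,r\geq3$ Theorem \ref{s3} applies to $K\overrightarrow{A_r}$ (which is Nakayama but \emph{not} radical square zero for $r\geq3$), and for $\min(n,r)=2$ one uses $T_n(K\overrightarrow{A_2})\simeq T_2(K\overrightarrow{A_n})$ and Example \ref{CL}. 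You explicitly flag the $\tau$-tilting finite but non-silting-discrete cases (e.g.\ $n=3$, $r\geq3$, where $T_3(\Lambda)$ \emph{is} $\tau$-tilting finite by Theorem \ref{s3}) as ``the crux'' and propose to exhibit infinite chains of silting objects directly in an interval; you give no construction of such chains, and this is a genuine gap --- without the derived-equivalence transfer there is no mechanism in the paper to produce them. The same gap affects $n=2$, $r\geq5$: the Section 3 classification only covers $n\geq3$, so $\tau$-tilting infiniteness of $T_2(\Lambda)$ itself is not available; the paper instead refutes silting-discreteness by passing to $T_2(K\overrightarrow{A_r})\simeq T_r(K\overrightarrow{A_2})$.

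Two smaller points. For $r=2$ the algebra $\Lambda$ is literally $K\overrightarrow{A_2}$ (its radical square is already zero), so no tensoring with $K[x]/(x^2)$ is involved; your proposed isomorphism $T_n(\Lambda)\simeq K[x]/(x^2)\otimes_KT_n(K\overrightarrow{A_2})$ is false (the right-hand side has loops in its quiver), and Theorem \ref{sd} is not needed anywhere in this proof. The positive cases are handled purely by Ladkani's derived equivalences: $T_n(K\overrightarrow{A_2})$ for $n=1,2,3,4$ is derived equivalent to the path algebras of types $A_2$, $D_4$, $E_6$, $E_8$ respectively, which are silting-discrete as representation-finite hereditary algebras.
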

\begin{proof}
It is well-known that $\Lambda$ is derived equivalent to $K\overrightarrow{A_r}$, and so $T_n(\Lambda)$ is derived equivalent to $T_n(K\overrightarrow{A_r})$, which is $\tau$-tilting infinite if $n\geq3$ and $r\geq3$ by Theorem \ref{s3}.
In the case, it is not silting-discrete.

We already know that $T_n(K\overrightarrow{A_2})\simeq T_2(K\overrightarrow{A_n})$ is not silting-discrete for $n\geq5$; see Example \ref{CL}.
For $n=1,2,3$ and $4$,
we have the ADE-chain $A_2, D_4, E_6$ and $E_8$, respectively.
This means that $T_n(K\overrightarrow{A_2})$ is derived equivalent to the path algebra of each type \cite{L},
which is silting-discrete.
This completes the proof.
\end{proof}

%%%%%%%%%%%%%%%%%%%%%%%%%%%%%%%%%%%%%%%%%%%%%%%%%%%%%%%%%%%%%%%%%%%%%%%%%%%%%%%%%%%%%%%%%%%%%%%%%%%%%%%%%%%%%%%%
\section*{Acknowledgements}
The authors express their deep gratitude to Osamu Iyama for insightful comments.
In particular, he pointed out that there is a serious gap in the proof of Theorem \ref{2taustfrf}(1) in the first version.
He also told an easier and natural proof of Theorem \ref{sd}.
%The authors would like to thank the referees for helpful comments.
Thanks are also due to referees for giving the authors helpful comments.

%%%%%%%%%%%%%%%%%%%%%%%%%%%%%%%%%%%%%%%%%%%%%%%%%%%%%%%%

%%%%%%%%%%%%%%%%%%%%%%%%%%%%%%%%%%%%%%%%%%%%%%%%%%%%%%%%
\end{document}